\theoremstyle{plain}
\newcommand{\C}{\mathbb{C}}
\newcommand{\f}{\mathcal{F}}
\newcommand{\Le}{L}
\newcommand{\p}{\mathbb{P}}
\newcommand{\sing}{\mathrm{Sing}}
\newtheorem{maintheorem}{Theorem}
\newtheorem{secondtheorem}{Theorem}
\newtheorem{theorem}{Theorem}[section]
\newtheorem{lemma}[theorem]{Lemma}
\newtheorem{proposition}[theorem]{Proposition}
\newtheorem{corollary}[theorem]{Corollary}
\newtheorem{example}[theorem]{Example}
\newtheorem{remark}[theorem]{Remark}
\begin{document}

\title[Codimension one foliations of degree four]{Holomorphic foliations of degree four on the complex projective space}

\date{\today}
\author[A. Fern\'andez-P\'erez]{Arturo Fern\'andez-P\'erez}
\address[Arturo Fern\'{a}ndez P\'erez] {Department of Mathematics. Federal University of Minas Gerais. Av. Ant\^onio Carlos, 6627 
CEP 31270-901\\
Pampulha - Belo Horizonte - Brazil\\}
\email{fernandez@ufmg.br}

\author[V\^angellis Sagnori Maia]{V\^angellis Sagnori Maia}
\address[V\^angellis Sagnori Maia] {Department of Mathematics. Federal University of Minas Gerais. Av. Ant\^onio Carlos, 6627 
CEP 31270-901\\
Pampulha - Belo Horizonte - Brazil\\}
\email{vangellis07@gmail.com}

\subjclass[2010]{Primary 32S65 - Secondary 37F75}
\thanks{The first author is partially supported by CNPq-Brazil PDE 2019, Grant Number 201009/2020-0, and FAPEMIG  RED-00133-21, and Pronex-Faperj E-26/010.001270/2016. The second-named was supported by CAPES-Brazil.
}
\keywords{Holomorphic foliations, rational first integral, transversely affine structure, transversely projective structure, pull-back of foliations, Godbillon-Vey sequences.}

\begin{abstract}
In this paper, we study holomorphic foliations of degree four on complex projective space $\p^n$, where $n\geq 3$, with a special focus on obtaining a structural theorem for these foliations. Furthermore, for a foliation $\f$ of degree $d\geq 4$ with a sufficiently high $k^{th}$-jet, we prove that either $\f$ is transversely affine outside a compact hypersurface, or $\f$ is transversely projective outside a compact hypersurface, or $\f$ is the pull-back of a foliation on $\p^2$ by a rational map.
\end{abstract}

\maketitle
\section{Introduction and Statement of the results}
The study of holomorphic foliations has attracted the attention of many mathematicians and has been gaining more prominence in recent decades. For example, it is well established that the
 theory of foliations plays in important role in the analysis of subvarieties within projective varieties. A compelling illustration of this fact is provided by Bogomolov's paper \cite{Bogo}, which concerns the renowned Green-Griffiths-Lang conjecture. Techniques from
 Algebraic Geometry have proven exceptionally valuable when exploring singular holomorphic foliations. Notably, in his acclaimed Lecture Notes \cite{jouanolou}, J.-P. Jouanolou established that a generic polynomial vector field of degree greater than one on the complex projective plane does not possess any invariant algebraic curve. Recently, a focal point in the theory of foliations has been the \textit{classification problem}, particularly for \textit{codimension one holomorphic foliations on $\p^n$}. 
 For instance, the following conjecture (cf. \cite[page 2]{cerveaualcidesgrau3}) is attributed to different authors such as Marco Brunella, Alcides Lins Neto, Dominique Cerveau, and others:\\
\indent \textbf{Conjecture 1.}
Any codimension one holomorphic foliation $\f$ on $\p^n$, with $n\geq 3$,
\begin{itemize}
  \item[(*)] either $\f$ is transversely projective outside a compact hypersurface; 
  \item[(**)] or $\f$ is the pull-back of a holomorphic foliation $\mathcal{G}$ on $\p^2$ by a rational map $\Phi:\p^n\dashrightarrow\p^2$.
\end{itemize}
\par The concepts used in the previous conjecture will be explained throughout this paper. We emphasize that every codimension one holomorphic foliation F on $\mathbb{P}^n$ corresponds to a homogeneous 1-form $\omega$ on $\mathbb{C}^{n+1}$ of degree $d + 1$ that defines $\pi^{*}(\f)$, whose singular set has codimension at least two. Here, $\pi : \mathbb{C}^{n+1} \setminus {0} \rightarrow \mathbb{P}^n$ represents the natural projection, and $\pi^{*}(\f)$ signifies the pull-back of $\f$ via $\pi$. By definition, the integer $d$ is the degree of the foliation $\f$. We remark that the integer $d$ is precisely the number of tangencies of $F$ with a generic line $L$. Consequently, we are equipped to explore the space of codimension one holomorphic foliations on $\mathbb{P}^n$ with a specific degree. The Zariski closure of this set is identified as an algebraic set and naturally has irreducible components
\par With a focus on explaining these irreducible components to a certain degree, significant progress has been made through several works. The initial case that can be considered involves the space of codimension one foliations on $\p^n$ of degree zero. It has been established that this space possesses only one component, isomorphic to the Grassmannian of lines in $\p^n$. A proof of this assertion can be found in \cite{deserticerveau}. In $\p^n$ with $n \geq 3$, the space of holomorphic foliations of degree one contains two irreducible components, as established by Jouanolou in \cite{jouanolou}. Later, Dominique Cerveau and Alcides Lins Neto resumed the study of irreducible components and presented a significant result that brought this subject back into the spotlight. In \cite{cervalccompdgretwo}, they proved that in $\p^n$, with $n \geq 3$, the space of codimension one foliations of degree two has six irreducible components. Furthermore, they explicitly illustrate the generic element of each one of these irreducible components. These components are called \textit{Linear pull-back foliations, Rational components, Logarithmic components, and an Exceptional component}. The paper of Cerveau-Lins Neto was an invaluable contribution and served as  motivation for numerous researchers to focus their studies on the classification of irreducible components of the space of holomorphic foliations on $\p^n$. Recently, in \cite{Correa}, Corr\^ea-Muniz proved that the space of foliations of degree two on $\p^n$ and dimension $k\geq 2$ also encompasses six irreducible components. 
\par Despite the progress made in these studies, solving the classification problem of irreducible components of the space of holomorphic foliations is far from simple. For example, even after a complete description of the irreducible components of the degree two foliations space on $\p^n$, with $n \geq 3$, it has not yet been possible to explain all the irreducible components of the space of codimension one holomorphic foliations on $\p^n$, $n\geq 3$, of degree $d\geq 3$. Nevertheless, Cerveau and Lins Neto presented a structural theorem for degree three foliations in \cite{cerveaualcidesgrau3}. More precisely, they proved that if $\f$ is a holomorphic codimension one foliation of degree three on $\p^n$, with $n\geq 3$, then
\begin{itemize}
    \item either $\f$ admits a rational first integral,
    \item or $\f$ is transversely affine outside a compact hypersurface,
    \item or $\f=\Phi^{*}(\mathcal{G})$, where $\Phi:\p^n\dashrightarrow\p^2$ is a rational map and $\mathcal{G}$ is a foliation on $\p^2$.
\end{itemize}
\par Recently, in \cite{extdgreethree}, F. Loray, J. V. Pereira and F. Touzet established a more refined structural theorem for codimension one foliations of degree three on $\p^3$. Building upon this, in \cite{extdgreethreepn}, R. C. da Costa, R. Lizarbe and J. V. Pereira extended the result to $\p^n$, $n \geq 3$. Furthermore, Da Costa, Lizarbe and Pereira \cite[Theorem B]{extdgreethreepn} provide a complete list of the irreducible components of the space of foliations of degree three on $\p^n$, $n\geq 3$, whose general elements do not admit a rational first integral. However, even in \cite{extdgreethreepn}, a complete classification of the irreducible components of the space of foliations of degree three on $\p^n$, $n\geq 3$ is not known. Nevertheless, they have established that the space of codimension one foliations of degree three on $\p^n$, $n\geq 3$, have at least 24 distinct irreducible components.
\par Inspired by the previous results, this paper is devoted to studying degree four foliations on $\p^n$, with $n\geq 3$. Our main result is the following:
\begin{maintheorem}\label{main_theo}
Let $\f$ be a codimension one holomorphic foliation of degree four on $\p^n$, with $n \geq 3$. Then, 
\begin{itemize}
    \item[(i)] either $\f$ admits a rational first integral;
    \item[(ii)] or $\f$ is transversely affine outside a compact hypersurface;
    \item[(iii)] or $\f$ is a pure transversely projective outside a compact hypersurface;
    \item[(iv)] or $\f = \Phi^{\ast}(\mathcal{G})$, where $\Phi: \p^n \dashrightarrow \p^2$ is a rational map and $\mathcal{G}$
    is a holomorphic foliation on $\p^2$.
    \item[(v)] or there exists a birational map $\Psi:\mathbb{P}^{n-1}\times\mathbb{P}^{1}\dashrightarrow \mathbb{P}^{n}$ such that the foliation $\Psi^{*}(\f)$ is defined by a 1-form described as follows:
\[\eta_t = \beta_0 + t\beta_1 + t^2 \beta_2  + t^3 \beta_3 + t^4 \beta_4 -tdt,\]
where the 1-forms $\beta_j$ do not depend on $t\in\mathbb{P}^{1}$, for all $0\leq j\leq 4$.
\end{itemize}
\label{teoprincip}
\end{maintheorem}
\par Note that in order to confirm Conjecture 1 for degree four codimension one foliations on $\p^n$, $n\geq 3$, it is necessary to prove that item $(v)$ is equivalent to some of the previous items. We hope to prove this fact in the near future. One of the difference between Theorem \ref{main_theo} and the structural theorem of degree three foliations given by Cerveau-Lins Neto \cite{cerveaualcidesgrau3} is that pure transversely projective foliations exist. That is, there are foliations of degree four with projective transverse structure that is not affine. See the example given in \cite[Section 5.4]{artigoteogvs}.
\par Now, let us focus on foliations of degree $d\geq 4$. Let $\f$ be a degree $d$ foliation on $\p^n$. Then, $\f$ can be represented in an affine coordinate system $\C^n \simeq E \subset \p^n$ by an integrable polynomial 1-form $$\omega_E =\displaystyle \sum_{j=0}^{d+1}\omega_j$$ where the coefficients of the 1-forms $\omega_j$ are polynomials homogeneous of degree $j$, $0 \leq j \leq d+1$, and $i_R(\omega_{d+1})=0$. Given $p \in E$, let $j_p^{k}(\omega_E)$ be the $k^{th}$-jet of $\omega_E$ at $p$, and let
$$\mathcal{J}(\f,p)=min\{k \geq 0: j_{p}^{k}(\omega_E) \neq 0\}.$$
Note that $\mathcal{J}(\f,p)$ depends only on $p$ and $\f$, and not on $E$ and $\omega_E$. Moreover, the singular set of $\f$ is given by 
\[\sing(\f)=\{p\in\p^n:\mathcal{J}(\f,p)\geq 1\}.\]
It is well-known that $\sing(\f)$ is an algebraic set and always contains irreducible components of codimension two, (cf. \cite{lins}). 
\par Motivated by the family of foliations of \cite[Section 5.4]{artigoteogvs}, where the first $k^{th}$-jets ($k<3$) of the 1-form defining this family are all zero, we propose the following structural theorem for foliations of degree $d\geq 4$ in $\p^n$, with $n\geq 3$.
\begin{secondtheorem}\label{teorema_B}
Let $\f$ be a codimension one holomorphic foliation of degree $d\geq 4$ on $\p^n$, with $n \geq 3$. Suppose that one of the two conditions is satisfied: 
 \begin{enumerate}
 \item for all $p\in\sing(\f)$, we have $\mathcal{J}(\f,p)=1$;
 \item there exists $p\in\sing(\f)$ such that $\mathcal{J}(\f,p)\geq d-1$.
 \end{enumerate}
Then,
\begin{itemize}
    \item[(i)] either $\f$ admits a rational first integral;
    \item[(ii)] or $\f$ is transversely affine outside a compact hypersurface;
    \item[(iii)] or $\f$ is a pure transversely projective outside a compact hypersurface;
    \item[(iv)] or $\f = \Phi^{\ast}(\mathcal{G})$, where $\Phi: \p^n \dashrightarrow \p^2$ is a rational map and $\mathcal{G}$
    is a holomorphic foliation on $\p^2$.
\end{itemize}
\end{secondtheorem}
\par To prove Theorems \ref{main_theo} and \ref{teorema_B}, we will use the tools of  \cite{cerveaualcidesgrau3} and techniques concerning foliations admitting a finite \textit{Godbillon-Vey} sequence (cf. \cite{Scardua} and \cite{artigoteogvs}). 
\par The paper is organized as follows: in Section \ref{seccion_2}, we introduce the concept of holomorphic foliations and present some important results about codimension one foliations on $\p^n$. We also define the notions of affine and projective transverse structures of a foliation. Moreover, we establish the notion of a Godbillon-Vey sequence and state crucial results of foliations that admit a Godbillon-Vey sequence with finite length. Section \ref{seccion_3} is devoted to the proof of Theorem \ref{main_theo}, which will be broken down into several lemmas. Our proof is given step to step. Each step is according to length of a Godbillon-Vey sequence adapted to a degree four foliation.  Finally, in section \ref{prova_B}, we will prove Theorem \ref{teorema_B} using some lemmas used in the proof of Theorem \ref{main_theo}.
\section{Background and knows results}\label{seccion_2}
\subsection{Codimension one holomorphic foliations} 
Let $M$ be a complex compact connected manifold of dimension $n\geq 2$. A codimension one singular foliation $\mathcal{F}$ on $M$ is given by a 
covering by open subsets $\{U_{i}\}_{i\in I}$ on $M$ and a collection of integrable holomorphic 1-forms $\omega_{i}$ on $U_{i}$
satisfying: 
\begin{enumerate}
 \item $\omega_{i}\wedge d\omega_{i}=0$.
\item The singular set $\sing(\omega_{i})=\{p\in U_{i}: \omega_{i}(p)=0\}$ of $\omega_{i}$  is of
codimension at least two.  
\item On each non-empty intersection $U_{i}\cap U_{j}$
\begin{align*}
\omega_{i}=g_{ij}\omega_{j},\,\,\,\,\, \text{with}\,\,\,\,\, g_{ij}\in\mathcal{O}^{*}(U_{i}\cap U_{j}).
\end{align*}
\end{enumerate}
 The condition (3) implies that the singular set of $\mathcal{\mathcal{F}}$ defined by $\sing(\mathcal{F}):=\cup_{i\in I}\sing(\omega_{i})$ is a
 complex variety of $M$ of codimension at least two.
\par In the special case where $M$ is a projective manifold and $\mathcal{F}$ a foliation as above, we can associate to $\mathcal{F}$ a
meromorphic 1-form $\omega$ in the following way. We take a rational vector field $Z$ on $M$, not tangent to $\mathcal{F}$, that is $h_j=i_{Z_{|U_j}}\omega\not\equiv 0$; the meromorphic 1-form $\omega$ defined on $U_{j}$ by $\omega_{|U_{j}}=\omega_{j}/h_{j}$ is global and integrable. In this
case, we will say that $\omega$ defines $\mathcal{F}$.
\par We specialize to the case $M=\mathbb{P}^{n}$, the $n$ dimensional complex projective space. In that context, we can define $\mathcal{F}$ as follows: let $\pi:\mathbb{C}^{n+1}\setminus\{0\}\rightarrow\mathbb{P}^{n}$ be the natural projection, 
and consider $\pi^{*}(\mathcal{F})$ the pull-back of $\mathcal{F}$ by $\pi$; with the previous notations, $\pi^{*}(\mathcal{F})$ is defined by the 1-form
$\pi^{*}(\omega_j)$ on $\pi^{-1}(U_j)$. Recall that, for $n\geq 2$, we have $H^{1}(\mathbb{C}^{n+1}\backslash\{0\},\mathcal{O}^{*})=\{1\}$ (cf. \cite{cartan}). Consequently, there exists a global holomorphic 1-form $\omega$ on $\mathbb{C}^{n+1}\backslash\{0\}$ which defines $\pi^{*}(\mathcal{F})$ on $\mathbb{C}^{n+1}\backslash\{0\}$. By Hartog's prolongation theorem $\omega$ can be extended holomorphically at $0$. By construction we have $i_{R}\omega=0$, where $R$ is the Euler (or radial) vector field:
$$R=\sum^{n}_{j=0}z_{j}\partial_{z_{j}}.$$
 This fact and the integrability condition imply that $\omega$ is co-linear to an integrable homogeneous 1-form $\sum_{j=0}^{n}A_{j}(z)dz_{j}$, where the $A_{j}$'s are homogeneous polynomials of degree $d+1$, $gcd(A_0,\ldots,A_n)=1$. Therefore, we can state that every codimension one holomorphic foliation $\mathcal{F}$ on $\mathbb{P}^{n}$ corresponds to a homogeneous integrable 1-form $\omega$ on $\mathbb{C}^{n+1}$ that defines $\pi^{*}(\mathcal{F})$ with $cod(\sing(\omega))\geq 2$.
By definition, the integer $d$ is the degree of the foliation $\mathcal{F}$. Observe that 1-form $\omega$ is well-defined up to multiplication by non-zero complex number.
\par The following result will be crucial to demonstrate our theorems.
\begin{corollary}[Cerveau - Lins Neto \cite{cerveaualcidesgrau3}]
Let $\f$ be a codimension one holomorphic foliation on $\p^n$, $n \geq 3$. If $\mathcal{J}(\f, p) \leq 1, \; \forall \; p \in \p^n$, then $\f$ has a rational first integral.
\label{cor1}
\end{corollary}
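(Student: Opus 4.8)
The plan is to reformulate the hypothesis $\mathcal{J}(\f,p)\le 1$ as a statement about the linear parts of a defining $1$-form, and then to argue that it forbids exactly the local configurations responsible for non-integrable transverse structures. Writing $\f$ in an affine chart as a polynomial $1$-form $\omega_E$, the condition $\mathcal{J}(\f,p)\le 1$ says that at every singular point the linear part of $\omega_E$ is nonzero, i.e.\ $\omega_E$ never vanishes to order $\ge 2$. First I would record what this excludes: if three or more invariant hyperplanes of a logarithmic (or Darboux) model met at a point, the local form would vanish to order $\ge 2$ there, so the hypothesis rules out such ``crossings'' and forces the transverse type along each codimension-two component of $\sing(\f)$ to be a single order-one singularity, the simplest possible. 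This is the prototype of the case analysis: order $\ge 2$ is created at every hyperplane crossing, so the only logarithmic survivor is the pencil $\ell_2\,d\ell_1-\ell_1\,d\ell_2$, which already has the first integral $\ell_1/\ell_2$.

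Next I would produce a global transverse structure. Since $i_R\omega=0$ yields $i_R\,d\omega=(d+2)\,\omega$ for the homogeneous form $\omega$ on $\C^{n+1}$, any point at which the full $1$-jet of $\omega_E$ degenerates lies in $\sing(d\omega)\cap\sing(\omega)$, and the order-one hypothesis makes this locus empty; consequently, along the smooth generic part of each codimension-two component of $\sing(\f)$ the transverse type is the simplest order-one singularity and carries a local integrating factor. The aim is to patch these local integrating factors into a single closed rational $1$-form $\eta$ with $d\omega=\eta\wedge\omega$, that is, to exhibit a Godbillon-Vey sequence of finite length for $\f$. Here the hypothesis $n\ge 3$ is essential: it is precisely the rigidity of codimension-one foliations on $\p^n$ for $n\ge 3$---absent on $\p^2$, where Jouanolou foliations have order-one singularities and yet no rational first integral---that allows the local data to glue to a global logarithmic $1$-form and forces the associated residues to be rationally related.

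Finally I would upgrade the transverse affine structure produced by such an $\eta$ to an honest rational first integral, invoking the finite Godbillon-Vey results recalled in the paper (cf.\ \cite{Scardua}, \cite{artigoteogvs}): once $\eta$ is closed with rational residues, the integrating factor $\exp\!\int\eta$ is a well-defined rational function whose level sets are the leaves of $\f$. I expect the main obstacle to be exactly this last stage---showing that the order-one hypothesis does not merely yield a transversely affine (or projective) structure but actually closes up the periods, so that the transverse structure is trivial and $\f$ is a pencil. Concretely one must prove that no genuinely transversely-affine, non-integrable model survives the constraint $\mathcal{J}(\f,\cdot)\le 1$, and the logarithmic computation above---that order $\ge 2$ appears at every crossing---is the model for the case analysis that rules these out.
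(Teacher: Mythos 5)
The paper itself offers no proof of this Corollary: it is quoted from Cerveau--Lins Neto \cite{cerveaualcidesgrau3}, so your argument has to stand on its own, and it does not --- both of its load-bearing steps are missing. The first is the gluing step. You assert that local integrating factors along the codimension-two components of $\sing(\f)$ patch together into a globally defined closed rational $1$-form $\eta$ with $d\omega=\eta\wedge\omega$, invoking a ``rigidity'' of codimension-one foliations on $\p^n$, $n\geq 3$. No such gluing principle exists, and none is argued: a global affine transverse structure is a strong \emph{global} conclusion (it is exactly the kind of conclusion the present paper labors to obtain, in far more special situations, via blow-ups and Godbillon--Vey sequences), and it is not determined by the transverse local models along $\sing(\f)$ --- the obstruction lives in global monodromy/periods, which local data cannot see. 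Even the local input is shaky: a singularity with nonzero linear part need not admit a local meromorphic integrating factor (resonant or non-linearizable transverse germs).

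The second gap is the final implication, which is false as stated. A closed rational $\eta$ with $d\omega=\eta\wedge\omega$ does not yield a rational first integral: if $\eta$ has rational non-integer residues, then $\exp\int\eta$ is a multivalued algebraic function, not a rational one ($\eta=\tfrac12\,df/f$ gives $f^{1/2}$), and foliations defined by closed rational $1$-forms in general admit only Darboux first integrals $\prod_i f_i^{\lambda_i}$. Concretely, $\omega=f_1f_2f_3\bigl(\lambda_1\,\tfrac{df_1}{f_1}+\lambda_2\,\tfrac{df_2}{f_2}+\lambda_3\,\tfrac{df_3}{f_3}\bigr)$ with $\sum_i\lambda_i\deg f_i=0$ and incommensurable $\lambda_i$ is transversely affine, defined by a closed rational form, and has no rational first integral. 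Such examples are consistent with the Corollary only because they necessarily carry points with $\mathcal{J}(\f,p)\geq 2$ (where the three hypersurfaces meet, a nonempty locus since $n\geq 3$); to make your scheme work you would have to prove an order-$\geq 2$ statement of this kind for \emph{every} transversely affine model --- Darboux forms with singular or mutually tangent branches, Riccati pull-backs, etc. --- not just for arrangements of hyperplanes, and that is precisely the content you leave unproved. By contrast, the argument in \cite{cerveaualcidesgrau3} does not pass through transverse structures at all: it rests on the pointwise classification of the integrable linear $1$-jets allowed at order-one singular points (essentially $\mu_1\ell_1\,d\ell_2-\mu_2\ell_2\,d\ell_1$ or $dQ$ with $Q$ a quadric) and on a global argument assembling these local models into a pencil, yielding a first integral of the form $P^a/Q^b$.
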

\par Let $\f$ be a codimension one holomorphic foliation on a projective manifold $M$. Assume that $\omega$ is a meromorphic 1-form defining $\f$. We say that $\f$ is \textit{transversely projective} or $\f$ has a \textit{transverse projective structure} (cf. \cite{TeseScardua}) when there are meromorphic 1-forms $\omega_0=\omega$, $\omega_1$ and $\omega_2$ on $M$ satisfying 
\[
\left\{\begin{array}{lll}
    d\omega_0 = \omega_0 \wedge \omega_1 \\
    d\omega_1 = \omega_0 \wedge \omega_2\\
    d\omega_2 = \omega_1 \wedge \omega_2
    \end{array}\right.\]
    This means that, outside the polar and singular set of the 1-forms $\omega_i$, the foliation $\f$ is regular and transversely projective in the classical sense (see \cite{godbillon}, or \cite[Section 2.2]{artigoteogvs}). When $\omega_2\neq 0$, we say $\f$ is \textit{pure transversely projective}. If $\omega_2=0$, i.e. $d\omega_1=0$, we say that $\f$ is \textit{transversely affine} or $\f$ has a \textit{transverse affine structure}. 
    \par To end this subsection, let us give some examples of foliations on $\mathbb{P}^n$.
\begin{example}[Foliations with rational first integral]
Let $P,Q:\mathbb{C}  ^{n+1}\rightarrow\mathbb{C}$ be a homogeneous polynomials such that $\deg(P)=\deg(Q)=k\geq1$. Then $\omega=QdP-PdQ$ defines a 
codimension one foliation $\mathcal{F}$ on $\mathbb{P} ^n$ with a rational first integral $P/Q$.
\end{example}
\begin{example}[Foliations associated to closed meromorphic 1-forms]
If $\omega$ is a closed meromorphic 1-form on $\mathbb{P}^n$, $n\geq 2$, then it defines a codimension one holomorphic foliation on $\mathbb{P}^n$. According to  \cite[Proposition. 1.2.5]{livro}, we have that $\omega$ has a decomposition 
$$\omega=\sum_i\lambda_{i}\frac{df_i}{f_i}+dh,$$
where the $\lambda_i$'s are complex numbers and the $f_i$'s and $h$ are rational functions. The leaves are (outside the singular set of the foliation) the connected components of the level sets of the multi-valued function $\sum_i\lambda_i\log f_i+h$.
\end{example}

\subsection{Godbillon-Vey sequences \cite{godbillon}} 
Let $\f$ be a codimension one holomorphic foliation on a projective manifold $M$.
A \textit{Godbillon-Vey sequence} for $\f$ (briefly G-V-S) is a sequence $(\omega_0,\omega_1,\ldots,\omega_k,\ldots)$ of meromorphic 1-forms on $M$ such that $\f$ is defined by $\omega_0=0$, and the formal 1-form 
\begin{equation}\label{form2}
    \Omega=dz+\omega_0+z\omega_1+\frac{z^2}{2}\omega_2+\sum_{k\geq 3}^{\infty}\frac{z^k}{k!}\omega_k,
\end{equation}
is integrable, that is, $\Omega\wedge d\Omega=0$. In this case, the 1-form in (\ref{form2}) is meromorphic and can be extended meromorphically to $M\times\p^1$. Since it is integrable, it defines a codimension one foliation $\mathcal{H}$ on $M\times\p^1$ such that $\mathcal{H}|_{M\times\{0\}}=\f$.
\par When a meromorphic vector field $X$ exists on $M$ which is transversal to $\f$ at a generic point, then a unique meromorphic 1-form $\omega$ defining $\f$ exists, satisfying $i_X(\omega)=1$. According to \cite[Section 2.1]{artigoteogvs}, we can define a Godbillon-Vey sequence for $\f$ by setting $\omega_k:=L_X^{k}(\omega)$, where $L_X^k(\omega)$ denotes the $k^{th}$ Lie derivative along $X$ of the 1-form $\omega$. 
\par When there exists $N\in\mathbb{N}$ such that $\omega_N\neq 0$ but $\omega_{k}=0$ for all $j>N$ then we say that $\f$ admits a \textit{finite G-V-S of length} $N$. In general, the length is infinite. If $\f$ admits a G-V-S of length $N\leq 2$ then $\f$ is transversely projective outside a compact hypersurface. When $N=1$ then $\f$ has a transverse affine structure, see for instance \cite{TeseScardua} and \cite{godbillon}. 
\par For foliations that admit G-V-S of length $\geq 3$, we have the following result:

\begin{theorem}[Cerveau - Lins-Neto - Loray - Pereira - Touzet \cite{artigoteogvs}]
Let $\f$ be a codimension one foliation on a complex manifold $M$ that admits a G-V-S of length $N \geq 3$. Then

\begin{itemize}
    \item either $\f$ is transversely affine;
    \item or there is a compact Riemann surface $S$, 1-meromorphic forms $\alpha_0, \cdots, \alpha_N$ in $S$ and a rational map $\phi :M \rightarrow S \times \p^1$ such that $\f$ is defined by the 1-form $\Omega = \phi^{\ast}(\eta)$, where
    \begin{equation}
        \eta = dz + \alpha_0 + z\alpha_1 + \cdots + z^{N}\alpha_N.
        \label{eqteogvs}
    \end{equation}
\end{itemize}
When $M= \p^n$, $n \geq 3$, necessarily $S = \p^1$ and the 1-form in \ref{eqteogvs} can be written as
$$\eta = dz - P(t,z)dt,$$
where $P \in \C(t)[z]$ and $\f = \phi^{\ast}(\mathcal{G})$, where $\mathcal{G}$ is defined in $\p^ 1 \times \p^1$ by the differential equation $\frac{dz}{dt} = P(t,z)$.
\label{teogvs}
\end{theorem}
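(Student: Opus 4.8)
The plan is to make the formal series concrete: since the sequence has finite length $N$, the $1$-form $\Omega$ is an honest polynomial of degree $N$ in $z$, and I would study the codimension one foliation $\mathcal{H}$ it defines on $M\times\p^1$. First I would unwind $\Omega\wedge d\Omega=0$. Writing $\Omega=dz+\sum_{k=0}^{N}\frac{z^k}{k!}\omega_k$ and separating the terms containing $dz$ from those that do not, the vanishing collapses to a single identity between power series in $z$; comparing coefficients yields the Godbillon-Vey relations
\[ d\omega_n=\sum_{k=0}^{n}\binom{n}{k}\,\omega_k\wedge\omega_{n-k+1},\qquad n\ge 0, \]
together with the finiteness constraints forced by $\omega_k=0$ for $k>N$; the first nontrivial one, coming from $d\omega_{N+1}=0$, reads $\sum_{k=2}^{N}\binom{N+1}{k}\,\omega_k\wedge\omega_{N+2-k}=0$.

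Because the coefficient of $dz$ equals $1$, the foliation $\mathcal{H}$ is transverse to the fibres of the projection $p\colon M\times\p^1\to M$ off a hypersurface, and the holonomy of $\mathcal{H}$ along a path $\gamma$ in $M$ is the flow of the degree-$N$ polynomial equation $\dot z=-\sum_{k=0}^{N}\frac{z^k}{k!}\,\omega_k(\dot\gamma)$. For $N\le 2$ this is a Riccati equation and one recovers the transversely projective case; the essential new feature for $N\ge 3$ is that, in the chart $w=1/z$, the $p$-fibres become tangent to $\mathcal{H}$ along the section $M\times\{\infty\}$ (one checks that $w^{N}\Omega$ contracted with $\partial_w$ equals $-w^{N-2}$, which vanishes at $w=0$). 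Hence the transverse pseudogroup is genuinely polynomial rather than M\"obius, and the whole point is to decide what global structure this forces.

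The heart of the proof is a dichotomy for the transverse holonomy pseudogroup $\mathcal{G}_\rho$ of $\mathcal{H}$. Either $\mathcal{G}_\rho$ preserves an affine structure on the fibres --- equivalently, the structure degenerates so that $\f$ admits a Godbillon-Vey sequence of length one --- and then $\f$ is transversely affine, which is the first alternative. Or $\mathcal{G}_\rho$ does not, and then I would algebraise the transverse structure: the field generated over $\C$ by its transverse first integrals has transcendence degree one, hence is the function field of a unique compact Riemann surface $S$, and the resulting morphisms assemble into a rational map $\phi=(\psi,g)\colon M\dashrightarrow S\times\p^1$, where $g$ is a developing map for the structure. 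The forms $\omega_k$ then descend, up to the factorials $k!$, to meromorphic $1$-forms $\alpha_k$ on $S$, so that $\f$ is defined by $\phi^{*}\eta$ with $\eta=dz+\alpha_0+z\alpha_1+\cdots+z^{N}\alpha_N$, and $\f=\phi^{*}\mathcal{G}$ for the generalised Riccati foliation $\mathcal{G}$ on $S\times\p^1$ cut out by $\eta$.

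The main obstacle is precisely this algebraisation: showing that the a priori transcendental transverse structure descends to a curve and that $\phi$ is rational rather than a multivalued analytic object. I would handle it using compactness of $M$ together with GAGA/Chow to algebraise the graph of $g$, and using the finiteness of $N$ to bound the polar orders and to guarantee regular-singular behaviour of the underlying flat connection, thereby excluding transcendental monodromy. Finally, the specialisation $M=\p^n$, $n\ge 3$, is soft: $\p^n$ is rationally connected and satisfies $H^{0}(\p^n,\Omega^1)=0$, so it admits no dominant rational map onto a curve of positive genus, whence $S=\p^1$. Taking $(t,z)$ as coordinates on $\p^1\times\p^1$ and writing $\alpha_k=a_k(t)\,dt$ with $a_k\in\C(t)$, the form becomes $\eta=dz-P(t,z)\,dt$ with $P=-\sum_{k}a_k(t)z^{k}\in\C(t)[z]$, so that $\mathcal{G}$ is the foliation of $\tfrac{dz}{dt}=P(t,z)$ on $\p^1\times\p^1$, as required.
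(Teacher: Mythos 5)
A caveat on the comparison itself: the paper does not prove this statement. It is quoted, with attribution, from \cite{artigoteogvs} and used as a black box in Section 3, so your proposal can only be judged against the proof in that reference and on its own internal correctness.

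Your formal layer is fine: the relations $d\omega_n=\sum_{k=0}^{n}\binom{n}{k}\,\omega_k\wedge\omega_{n-k+1}$, the truncation identity $\sum_{k=2}^{N}\binom{N+1}{k}\,\omega_k\wedge\omega_{N+2-k}=0$, the computation $i_{\partial_w}\bigl(w^{N}\Omega\bigr)=-w^{N-2}$ (so that for $N\ge 3$ the fibres of $M\times\p^1\to M$ are tangent to $\mathcal{H}$ along $M\times\{\infty\}$), and the final reduction to $S=\p^1$ when $M=\p^n$ are all correct. The genuine gap sits exactly at what you yourself call the heart of the proof: the dichotomy for the holonomy pseudogroup is asserted, never proved. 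The implication ``if $\mathcal{G}_\rho$ does not preserve an affine structure, then its field of transverse first integrals has transcendence degree one'' is not a lemma on the way to the theorem; it \emph{is} the theorem. In the non-affine case you never exhibit a single non-constant transverse first integral, and nothing in your sketch explains where one would come from --- a priori that field could consist of constants only, and ruling this out (or showing that it forces the affine alternative) is precisely what the finiteness of the sequence must be used for. In \cite{artigoteogvs} this is where all the work lies: the truncated relations for $n>N$ (you wrote down only the first of them), combined with the relations for $n\le N$, force strong degeneracies among $\omega_2,\dots,\omega_N$ (proportionality over the field of meromorphic functions); the resulting meromorphic ratios are then shown to be first integrals, and the curve $S$ is produced by Stein factorization of such a first integral, with the degenerate constant case landing in the transversely affine alternative. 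This is, incidentally, the same mechanism that the present paper runs in miniature in the proof of Lemma \ref{caso4} (Possibilities I and II). Without some version of it, your argument is circular.

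Moreover, the tools you propose to close the gap would not do so. GAGA/Chow is inapplicable: $M$ is a (compact) complex manifold, not assumed algebraic, and even for projective $M$ the graph of a developing map of a transverse structure is not a closed analytic subvariety, so there is nothing for Chow's theorem to act on. Likewise there is no ``underlying flat connection'' whose regular-singular behaviour could be invoked: for $N\ge 3$ the transverse model is polynomial rather than M\"obius, which is exactly the regime where the Riccati/projective-connection formalism --- and with it any monodromy or regular-singularity argument --- is unavailable. Building an algebraic model in this regime is the conclusion of the theorem, not a permissible input to its proof.
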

\begin{remark}\label{pull-back-gvs}
Let $\f$ and $\mathcal{G}$ be codimension one foliations on complex manifolds $X$ and $Y$, respectively. Suppose that $\mathcal{G}$ admits a finite G-V-S of length $N$ and $\f=\Phi^{*}(\mathcal{G})$, where $\Phi:X\dashrightarrow Y$ is a rational map. Then $\f$ also admits a G-V-S of length $N$.
\end{remark}
\begin{remark}\label{teo-rami}
Let $\f$ and $\mathcal{G}$ be codimension one foliations on complex manifolds $X$ and $Y$, respectively. Suppose that $\f=\Phi^{*}(\mathcal{G})$, where $\Phi:X\dashrightarrow Y$ is a dominant meromorphic map. Then, $\mathcal{G}$ is transversely projective (resp. affine) if, and only if, so is $\f$, (cf. \cite[Theorem 2.21]{artigoteogvs}). 
In the case $\Phi$ is a finite ramified covering, the above statement is equivalent to Theorem 1.6 (resp. Theorem 1.4) in \cite{Casale}.
\end{remark}

\section{Proof of Theorem \ref{main_theo}}\label{seccion_3}

\par Let $\f$ be a codimension one holomorphic foliation of degree four in $\p^n$, $n \geq 3$. In order to prove Theorem \ref{main_theo}, we consider two possibilities:
\begin{enumerate}
    \item for all $p\in\sing(\f)$, we have $\mathcal{J}(\f,p)=1$,
    \item there exists $p\in\sing(\f)$ such that $\mathcal{J}(\f,p)\geq 2$. 
\end{enumerate}
 In the first case, $\f$ admits a rational first integral by invoking Corollary \ref{cor1}. This consequently establishes the validity of assertion $(i)$ within Theorem \ref{main_theo}.
\par Therefore, we shall assume that there exists a point $p \in \p^n$ such that $\mathcal{J}(\f,p) \geq 2$. 
By employing affine coordinates $(z_1,\ldots,z_n)\in \C^n \subset \p^n$, where $p=0 \in \C^n$, we can conveniently consider $\f|_{\C^n}:\omega=0$, where $\omega$ is a polynomial 1-form in $\C^n$ expressed as follows: 
\begin{equation}\label{eq_1}
\omega = \alpha_2 + \alpha_3 + \alpha_4 + \alpha_5,
\end{equation} 
here, $\alpha_j$ corresponds to  homogeneous polynomial 1-forms of degree $j$, $2\leq j\leq 5$, and 
\begin{eqnarray}\label{eq_2}
i_R (\alpha_5)=0,\quad \quad\text{with}\quad R = \sum_{i=1}^{n} z_i \partial z_i.
\end{eqnarray}
We shall express $\alpha_j$ as: $\displaystyle \alpha_j (z) := \sum_{i=1}^{n} P_{ji}(z)dz_i$, with $2\leq j\leq 5$. Additionally, we  introduce
\[F_j (z) :=i_{R}(\alpha_{j-1})=\displaystyle \sum_{i=1}^{n}z_{i}\cdot P_{j-1 i}(z),\quad 3\leq j\leq 6,\] where $P_{j-1 i}$ are homogeneous polynomials of degree $j-1$. Note that $F_6 \equiv 0$ by (\ref{eq_2}). 
\par We proceed to examine the pull-back of 
$\omega$ through the process of  blowing-up of $\p^n$ at $0 \in \C^n \subset \p^n$. Let $\sigma : \tilde{\p}^n \rightarrow \p^n$ denote the blow-up at $0 \in \C^n\subset\p^n$, and let $\tilde{\f}$ represent the strict transform of $\f$ by $\sigma$. Our objective is to calculate $\sigma^{\ast}( \omega)$ within the chart
\begin{align}\label{explosion}
(\tau_1, \ldots, \tau_{n-1},x)=(\tau,x) \in \C^{n-1}\times \C \mapsto (x \tau,x) = (z_1, \ldots,z_n) \in \C^n \subset \p^n. 
\end{align}
 We have 
\begin{eqnarray}
\sigma^{\ast}(\omega) &= &\sigma^{\ast}(\alpha_2) +\sigma^{\ast}(\alpha_3) + \sigma^{\ast}(\alpha_4) + \sigma^{\ast}(\alpha_5) \nonumber \\
& = & (x^3\theta_2+x^2F_3(\tau,1)dx)+(x^4\theta_3+x^3F_4(\tau,1)dx)+(x^5\theta_4+x^4F_5(\tau,1)dx)+\nonumber\\
& & (x^6\theta_5+x^5F_6(\tau,1)dx)\nonumber\\
& =  & x^2\left(x\theta_2 + x^2\theta_3 + x^3\theta_4 + x^4\theta_5 + (F_3(\tau,1) + xF_4(\tau,1) + x^2F_5(\tau,1)+x^3 F_6(\tau,1))dx\right)\nonumber
\end{eqnarray}
where $$\displaystyle \theta_j = \sum_{i=1}^{n-1}P_{ji}(\tau,1)d\tau_i,\quad\quad 2\leq j\leq 5$$ depends only on $\tau$. 
Utilizing the condition $F_6(\tau,1)\equiv 0$, we derive the 1-form $\eta$ as follows: 
\begin{equation}\label{eq_3}
\eta=x\theta_2 + x^2\theta_3 + x^3\theta_4 + x^4\theta_5 + (F_3(\tau,1) + xF_4(\tau,1) + x^2F_5(\tau,1))dx.   
\end{equation}
 This 1-form serves to define the foliation $\tilde{\f}$ in the chart $(\tau,x)$.
\par Given the aforementioned conditions, we are presented with the subsequent possibilities for $F_i$:
\begin{itemize}
    \item[(1)] $F_3 \equiv F_4 \equiv F_5 \equiv 0$;
    \item[(2)] $F_4\equiv F_5 \equiv 0$, and $F_3 \not\equiv 0$;
    \item[(3)] $F_3 \equiv F_4 \equiv 0$, and $F_5 \not\equiv 0$;
    \item[(4)] Possibilities solved in an analogous way:
        \subitem (a) $F_3 \equiv F_5 \equiv 0$, and $F_4 \not\equiv 0$;
        \subitem (b) $F_3 \equiv 0$, and $F_4 \not\equiv 0 \not\equiv F_5$;
    \item[(5)] $F_5 \equiv 0$, and $F_3 \not\equiv 0 \not\equiv F_4$;
    \item[(6)] $F_4 \equiv 0$, and $F_3 \not\equiv 0 \not\equiv F_5$;
    \item[(7)] $F_3 \not\equiv 0$, $F_4 \not\equiv 0$, and $F_5 \not\equiv 0$.
\end{itemize}
We will investigate these potential scenarios by means of the following lemmas. Our exploration will revolve around $\omega$ and $\f$, taking into account the provided conditions, unless specified otherwise.
\begin{lemma}[Case 1]\label{caso1}
 If  $F_3\equiv F_4\equiv F_5 \equiv 0$. Then $\f$ is the pull-back by a linear map of a foliation on $\p^{n-1}$.
\end{lemma}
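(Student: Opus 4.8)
The plan is to recognize that the hypothesis of Case~1 is exactly the vanishing of $i_R(\omega)$, and then to read off the linear pull-back structure from this. First I would identify each $F_{j+1}$ with the dehomogenization of $i_R(\alpha_j)$: comparing the defining formula for $F_j(\tau,1)$ with $i_R(\alpha_{j-1})=\sum_i P_{j-1,i}(z)z_i$ evaluated at $z=(\tau,1)$ shows that $F_3,F_4,F_5$ are the dehomogenizations of $i_R(\alpha_2),i_R(\alpha_3),i_R(\alpha_4)$, exactly as was already observed for $F_6=i_R(\alpha_5)$. Since each $i_R(\alpha_j)$ is homogeneous, the hypothesis $F_3\equiv F_4\equiv F_5\equiv 0$ is equivalent to $i_R(\alpha_2)=i_R(\alpha_3)=i_R(\alpha_4)=0$, and together with the standing assumption $i_R(\alpha_5)=0$ this yields $i_R(\omega)=0$.

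Geometrically, $i_R(\omega)=0$ says that the radial field $R$ centred at $p=0$ is tangent to $\f$ at every regular point; hence the leaves of $\f$ are invariant under the homotheties $z\mapsto\lambda z$ and are therefore unions of lines through $p$. In the blow-up chart this is visible directly: with $F_3\equiv F_4\equiv F_5\equiv 0$ the form $\eta$ has no $dx$-component and is divisible by $x$, so after dividing, $\tilde{\f}$ is defined by $\tilde\omega=\theta_2+x\theta_3+x^2\theta_4+x^3\theta_5$. Writing the linear projection $\Phi\colon\p^n\dashrightarrow\p^{n-1}$ from $p$ in the chart $(\tau,x)$ as $(\tau,x)\mapsto[\tau_1:\cdots:\tau_{n-1}:1]$, its fibres are the lines $\{\tau=\text{const}\}$, and $\tilde\omega(\partial_x)=0$ says exactly that these fibres are tangent to $\tilde{\f}$, hence to $\f$. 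Thus $\f$ is constant along the fibres of $\Phi$, which is the defining property of a pull-back $\f=\Phi^{*}(\G)$ by the linear map $\Phi$.

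The point that needs care, and which I expect to be the main obstacle, is producing a genuine holomorphic foliation $\G$ on $\p^{n-1}$ whose pull-back is $\f$, i.e. checking that the transverse data really descend and do not depend on the fibre coordinate $x$. Here I would invoke integrability. Setting $\tilde\omega=\sum_i A_i(\tau,x)\,d\tau_i$ with $A_i=\sum_{j=2}^{5}x^{j-2}P_{ji}(\tau,1)$, the only contributions to the $dx\wedge d\tau_a\wedge d\tau_b$ part of $\tilde\omega\wedge d\tilde\omega$ come from the $dx$-terms of $d\tilde\omega$, and $\tilde\omega\wedge d\tilde\omega=0$ forces $A_a\,\partial_x A_b=A_b\,\partial_x A_a$ for all $a,b$. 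This means every ratio $A_b/A_a$ is independent of $x$, so the coefficients share a common factor, $A_i(\tau,x)=h(\tau,x)\beta_i(\tau)$; equivalently $\theta_2,\theta_3,\theta_4,\theta_5$ are pairwise proportional and $\tilde\omega=h(\tau,x)\,\beta(\tau)$ with $\beta=\sum_i\beta_i(\tau)\,d\tau_i$ depending only on $\tau$. Then $\G$ is the foliation on $\p^{n-1}$ defined by $\beta$ in the affine chart $\tau$ (integrable because $\tilde\omega$ is), and $\f=\Phi^{*}(\G)$ with $\Phi$ linear, as claimed; since degree is preserved under linear pull-back, $\G$ again has degree four.
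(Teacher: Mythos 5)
Your proof is correct, but it takes a genuinely different route from the paper's. Both arguments begin identically, by observing that the hypothesis $F_3\equiv F_4\equiv F_5\equiv 0$ amounts to $i_R(\omega)=0$ (the paper gets this in one line from $i_R(\omega)(x\tau,x)=x^3F_3+x^4F_4+x^5F_5$). From there the paper never leaves the affine chart upstairs: contracting $\omega\wedge d\omega=0$ with $R$ and using Cartan's magic formula together with Euler's identity, it obtains $\omega\wedge i_R(d\omega)=\sum_{i<j}(j-i)\,\alpha_i\wedge\alpha_j=0$, which splits by homogeneity degree into $\alpha_i\wedge\alpha_j=0$ for all pairs; then, using $\alpha_5\not\equiv 0$ (forced by the degree-four hypothesis) and a degree count, it concludes $\alpha_2=\alpha_3=\alpha_4=0$, so that $\omega=\alpha_5$ is homogeneous with $i_R(\alpha_5)=0$ and manifestly descends through the radial projection $\C^n\setminus\{0\}\to\p^{n-1}$. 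You instead work in the blow-up chart and extract, from the $dx\wedge d\tau_a\wedge d\tau_b$ component of $\tilde\omega\wedge d\tilde\omega=0$, the Wronskian relations $A_a\partial_xA_b=A_b\partial_xA_a$, hence $\tilde\omega=h(\tau,x)\,\beta(\tau)$; this is the standard descent argument (integrability plus tangency of the fibres of the projection forces invariance of the foliation along the fibres, i.e.\ $\omega\wedge L_{\partial_x}\omega=0$), and it yields the pull-back structure directly. What the paper's route buys is the sharper normal form $\omega=\alpha_5$, which exhibits $\G$ explicitly as the degree-four foliation defined by $\alpha_5$ on $\p^{n-1}$ --- exactly the degree statement that feeds into item (iv) of Proposition \ref{aprop} and the induction closing the proof of Theorem \ref{main_theo}. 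What your route buys is elementarity and generality: it uses neither $\alpha_5\not\equiv 0$ nor the vanishing $\alpha_2=\alpha_3=\alpha_4=0$, so it proves the descent for a foliation of any degree whose radial field at $p$ is tangent; the price is that the degree of $\G$ must then be recovered by your closing remark that linear pull-backs preserve degree --- a correct and standard fact (a generic line of $\p^n$ is mapped isomorphically by $\Phi$ onto a generic line of $\p^{n-1}$, matching tangency points), but one you should justify or cite explicitly if the degree-four claim is to be used in the induction.
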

\begin{proof}[Proof]
Since $i_R(\omega)(z)=i_R(\omega)(x\tau,x)=x^3F_3(\tau,1)+x^4F_4(\tau,1)+x^5F_5(\tau,1),$ 
the assumption $F_3 \equiv F_4 \equiv F_5\equiv 0$ implies that $i_R(\omega)=0$. As deduced from 
(\ref{eq_1}), it is evident that $\alpha_5 \not\equiv 0$, otherwise $\f$ would have degree $\leq 3$. 
On the other hand, the integrability of $\omega$ implies that
\begin{align}
\omega \wedge d\omega = 0 & \Rightarrow i_R(\omega \wedge d\omega) = 0 \nonumber \\
& \Rightarrow i_R(\omega) \wedge d\omega - \omega \wedge i_R( d\omega) =0. \nonumber
\end{align}
Since $i_R(\omega)=0$, we have
\begin{equation}
    \omega \wedge i_R( d\omega) =0.
    \label{eq1}
\end{equation}
Upon applying Cartan's magic formula, we get
\begin{eqnarray}\label{eq2}
    L_R (\omega) &=& di_R(\omega) + i_R(d \omega)\nonumber\\
    L_R (\omega) &= & i_R(d \omega)
\end{eqnarray}
and applying Euler's formula to $L_{R}(\omega)$ yields
\begin{eqnarray}
L_R (\omega) &=& L_R (\alpha_2 + \alpha_3 + \alpha_4 + \alpha_5) \nonumber \\
& =& L_R (\alpha_2) + L_R (\alpha_3) + L_R (\alpha_4) + L_R (\alpha_5) \nonumber \\
& =& 3\alpha_2 + 4\alpha_3 + 5\alpha_4 + 6\alpha_5 \nonumber.
\end{eqnarray}
It follows from (\ref{eq2}) that $i_R(d\omega)=3\alpha_2 + 4\alpha_3 + 5\alpha_4 + 6\alpha_5$ and from  (\ref{eq1}), we deduce
\begin{eqnarray}
0=\omega\wedge i_R(d\omega) & = &  4\alpha_2\wedge\alpha_3 + 5\alpha_2\wedge\alpha_4 + 6\alpha_2\wedge\alpha_5 +  \nonumber \\
& & 3\alpha_3\wedge\alpha_2 + 5\alpha_3\wedge\alpha_4 + 6\alpha_3\wedge\alpha_5 + \nonumber \\
 & & 3\alpha_4\wedge\alpha_2 + 4\alpha_4\wedge\alpha_3 + 6\alpha_4\wedge\alpha_5 + \nonumber\\
 & & 3\alpha_5\wedge\alpha_2 + 4\alpha_5\wedge\alpha_3 + 5\alpha_5\wedge\alpha_4\nonumber\\
& = & \alpha_2\wedge\alpha_3 + 2\alpha_2\wedge\alpha_4 + 3\alpha_2\wedge\alpha_5+\nonumber\\
& & \alpha_3\wedge\alpha_4 + 2\alpha_3\wedge\alpha_5 + \alpha_4\wedge\alpha_5 \nonumber
\end{eqnarray}
Since the coefficients of $\alpha_j$ are homogeneous polynomials of degree $j$, each of these exterior products generates a homogeneous 2-form polynomial of a degree different from each other, so none of them can be a combination of the others, we obtain
$$\alpha_2\wedge\alpha_3 = \alpha_2\wedge\alpha_4 = \alpha_2\wedge\alpha_5 = \alpha_3\wedge\alpha_4 = \alpha_3\wedge\alpha_5 = \alpha_4\wedge\alpha_5 =0.$$
 Since $\alpha_5\not\equiv 0$ and 
$ \alpha_2\wedge\alpha_5 =  \alpha_3\wedge\alpha_5 = \alpha_4\wedge\alpha_5 =0$, we have that
 there exists meromorphic functions $f_j$, $j=2, 3, 4$, such that $\alpha_j = f_j\alpha_5$. 
We assert that $f_j\equiv 0$ for all $j=2,3,4$. Indeed, suppose by contradiction that some $f_j\not\equiv 0$. Then
$$\omega = f_2 \alpha_5 + f_3 \alpha_5 + f_4 \alpha_5 + \alpha_5 = (f_2 + f_3 + f_4 + 1)\alpha_5,$$
we have that the coefficients of $(f_2 + f_3 + f_4 + 1)\alpha_5$ would be of a degree different than 5, an absurd. Hence, the statement is proven.
\par Consequently, all $f_j = 0$, and thus $\alpha_j = 0$, for all $j = 2, 3, 4$. In particular, we have $\omega = \alpha_5$. Since $\alpha_5$ is integrable, it defines a foliation of degree four, say $\f_{n-1}$, on $\p^{n-1}$. If we consider $\p^{n-1}$ as the set of lines through $0\in\C^n\subset\p^n$ and $\Phi:\C^{n}\setminus\{0\}\to\p^{n-1}$ the natural projection then $\f=\Phi^{*}(\f_{n-1})$. This finishes the proof of the lemma. 
\end{proof}


\begin{lemma}[Case 2]
 Suppose that $F_4\equiv F_5\equiv 0$, and $F_3 \not\equiv 0$. Then, either $\f$ is transversely affine, or $\f$ is the pull-back by a rational map of a foliation on $\p^2$.
\label{caso2}
\end{lemma}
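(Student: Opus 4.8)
The plan is to extract a short Godbillon-Vey sequence directly from the blow-up normal form (\ref{eq_3}) and then to appeal to Theorem \ref{teogvs}. Setting $F_4\equiv F_5\equiv 0$ in (\ref{eq_3}), the strict transform $\tilde\f$ is defined in the chart $(\tau,x)$ by
\[
\eta = x\theta_2 + x^2\theta_3 + x^3\theta_4 + x^4\theta_5 + F_3\,dx .
\]
Since each $\theta_j$ involves only the differentials $d\tau_i$, one has $i_{\partial_x}(\eta)=F_3\not\equiv 0$, so the vector field $X=\partial_x$ is transverse to $\tilde\f$ at a generic point. Hence $\omega_0:=\eta/F_3$ is the unique meromorphic $1$-form defining $\tilde\f$ with $i_X(\omega_0)=1$, and the iterated Lie derivatives $\omega_k:=L_X^{k}(\omega_0)$ form a Godbillon-Vey sequence for $\tilde\f$ (cf. \cite{artigoteogvs}).

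The decisive feature of this case is that the normalizing factor $F_3$ depends only on $\tau$, so that $L_{\partial_x}$ acts on
\[
\omega_0 = dx + \frac{x}{F_3}\theta_2 + \frac{x^2}{F_3}\theta_3 + \frac{x^3}{F_3}\theta_4 + \frac{x^4}{F_3}\theta_5
\]
simply by differentiating the powers of $x$. Iterating, one finds $\omega_4=\frac{24}{F_3}\,\theta_5$ and $\omega_5=0$. Since $\f$ has degree four, $\alpha_5\not\equiv 0$, which forces $\theta_5\not\equiv 0$ and hence $\omega_4\not\equiv 0$. Thus $\tilde\f$ admits a finite Godbillon-Vey sequence of length exactly $4$. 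As $\sigma$ is birational and $\f=(\sigma^{-1})^{*}\tilde\f$, Remark \ref{pull-back-gvs} transfers this to a finite Godbillon-Vey sequence of length $4$ for $\f$.

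Having a finite G-V-S of length $N=4\geq 3$, we invoke Theorem \ref{teogvs}, which gives the dichotomy: either $\f$ is transversely affine, or $\f=\phi^{*}(\mathcal{G})$, where $\phi:\p^n\dashrightarrow\p^1\times\p^1$ is rational and $\mathcal{G}$ is the foliation on $\p^1\times\p^1$ defined by $\frac{dz}{dt}=P(t,z)$ with $P\in\C(t)[z]$. In the second case, fixing a birational map $\beta:\p^1\times\p^1\dashrightarrow\p^2$ and setting $\mathcal{G}'=(\beta^{-1})^{*}\mathcal{G}$, we obtain $\f=(\beta\circ\phi)^{*}(\mathcal{G}')$, so that $\f$ is the pull-back of the foliation $\mathcal{G}'$ on $\p^2$ by the rational map $\beta\circ\phi$. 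This is exactly the asserted alternative.

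The explicit computation of the $\omega_k$ is routine, collapsing entirely once $F_4\equiv F_5\equiv 0$ removes the $x$-dependence of the normalizing factor. I expect the genuinely delicate points to be conceptual rather than computational. First, one must pass to the blow-up and use $X=\partial_x$ in order to obtain a \emph{finite} sequence: the analogous construction on $\C^n$ with the radial field $R$ (which is also generically transverse here, since $g:=i_R(\alpha_2)\not\equiv 0$) produces the non-terminating sequence $\omega_k=g^{-1}(\alpha_3+2^{k}\alpha_4+3^{k}\alpha_5)$, whose growing factors $2^k,3^k$ preclude cancellation. Second, one must carefully carry the finite length back across $\sigma$ via Remark \ref{pull-back-gvs} and recast a pull-back from $\p^1\times\p^1$ as a pull-back from $\p^2$.
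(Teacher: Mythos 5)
Your proof follows the same route as the paper's own argument in what the paper calls Subcase 1: divide $\eta$ by $F_3$, observe that the coefficients $\beta_j=\theta_{j+1}/F_3$ depend only on $\tau$, read off the Godbillon--Vey sequence $\omega_k=L_{\partial_x}^{k}(\eta/F_3)$, and feed a finite sequence of length $\geq 3$ into Theorem \ref{teogvs}, converting $\p^1\times\p^1$ into $\p^2$ by a birational map. Your computation $\omega_4=24\,\theta_5/F_3$, $\omega_5=0$ is correct. The genuine difference is that the paper does \emph{not} claim $\theta_5\not\equiv 0$: it splits into two subcases, and when $\theta_4\equiv\theta_5\equiv 0$ (so the sequence has length $\leq 2$ and Theorem \ref{teogvs} is unavailable) it proves transverse affineness directly, by showing $d\beta_1=0$ and $d\Omega=\beta_1\wedge\Omega$ after the substitution $x=1/w$. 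You instead dispose of that subcase by asserting it cannot occur.

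That assertion is where your write-up has a gap: the justification ``since $\f$ has degree four, $\alpha_5\not\equiv 0$'' is not valid as stated. A degree-four foliation can perfectly well have $\alpha_5\equiv 0$ in a given affine chart --- this happens exactly when the hyperplane at infinity of that chart is invariant. The correct general statement is a dichotomy: either $\alpha_5\not\equiv 0$, or $\alpha_5\equiv 0$ and $i_R(\alpha_4)\not\equiv 0$; for if both vanished, every coefficient of the associated homogeneous $1$-form on $\C^{n+1}$ would be divisible by the linear equation of the hyperplane at infinity, contradicting $\codim\,\sing\geq 2$. Your claim is nevertheless true here, but only because of the Case 2 hypothesis $F_5\equiv 0$: since $F_5$ is the restriction of the homogeneous polynomial $i_R(\alpha_4)$ to $\{z_n=1\}$, the hypothesis forces $i_R(\alpha_4)\equiv 0$, and the dichotomy then gives $\alpha_5\not\equiv 0$. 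Together with $i_R(\alpha_5)=0$ this yields $\theta_5\not\equiv 0$ (if $\theta_5\equiv 0$ then $\alpha_5=P_{5n}\,dz_n$ and $0=i_R(\alpha_5)=z_nP_{5n}$ forces $\alpha_5\equiv 0$), hence $\omega_4\not\equiv 0$. Once this reasoning is inserted, your proof is complete, and it in fact shows that the paper's Subcase 2 is vacuous under the Case 2 hypotheses --- though the paper's Subcase 2 computation is not wasted, since it is invoked again in the proof of Lemma \ref{caso4}, where a $1$-form of the shape $dx+x\beta_1+x^2\beta_2$ reappears with different coefficients.
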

\begin{proof}
Let $\displaystyle\beta_j = \frac{\theta_{j+1}}{F_3(\tau,1)}$, for $1\leq j\leq 4$. It follows from (\ref{eq_3}) that
\begin{eqnarray}
\eta & = & x\theta_2 + x^2\theta_3 + x^3\theta_4 + x^4 \theta_5 + F_3 (\tau,1)dx \nonumber \\
\tilde{\eta}&=&\frac{\eta}{F_3(\tau,1)}= x\beta_1 + x^2\beta_2 + x^3\beta_3 + x^4 \beta_4 + dx.
\label{eq27}
\end{eqnarray}
We have that $\tilde{\eta}$ defines $\tilde{\f}$. 
Note that $\beta_{j}$ only depends on $\tau$, for all $1\leq j\leq 4$, because $\theta_j$ and $F_3$ only depend on $\tau$. Then, $\tilde{\eta}$ admits the G-V-S
$$(\eta_0 = \tilde{\eta},\eta_1,\eta_2,\eta_3 ,\eta_4),$$ where $\tilde{\eta}(\partial_x) =1$, $\eta_j = L_{\partial_x}^{j}(\tilde{\eta})$, $1\leq j\leq 4$. Here 
$\Le_{\partial_x}^{j}(\tilde{\eta})$ denotes the $j^{th}$ Lie derivative along $\partial_x$ of the form $\tilde{\eta}$, see for instance \cite[Section 2.1]{artigoteogvs}.\\
Within this context, two subcases arise:\\
\noindent {\textbf{Subcase I.}}
($\beta_4 \equiv 0$ and $\beta_3 \not\equiv 0$) or ($\beta_4 \not\equiv 0$). In this subcase, $\f$ admits a finite G-V-S of length $\geq 3$. Therefore, either $\f$ is transversely affine, or $\f$ is a pull-back by a rational map of a foliation on $\p^1 \times \p^1$ by Theorem \ref{teogvs}. Since $\p^1 \times \p^1$ is birational to $\p^2$, we get that $\f$ is a pull-back of a foliation on $\p^2$ by a rational map.  \\
\noindent {\textbf{Subcase II.}}
$\beta_4\equiv\beta_3 \equiv 0$. In this situation, $\f$ admits a finite G-V-S of length $\leq 2$, where $$\tilde{\eta} = dx + x\beta_1 + x^2 \beta_2$$
Taking the change of coordinates $x=\frac{1}{w}$, we get $\tilde{\eta}= -\frac{\Omega}{w^2}$, where 
$\Omega= dw - \beta_2 - w\beta_1.$ 
We assert that $\beta_1$ is closed, that is, $d\beta_1 =0$. Indeed, from the integrability condition of $\Omega$, we have
\begin{eqnarray}
0 = \Omega \wedge d\Omega &=& (dw - \beta_2 - w\beta_1) \wedge ( - d\beta_2 - dw \wedge \beta_1 -wd\beta_1) \nonumber \\
& = & -dw \wedge d\beta_2 - wdw \wedge d\beta_1 + \beta_2 \wedge d\beta_2 - \beta_2 \wedge \beta_1 \wedge dw + \nonumber \\
& & w\beta_2 \wedge d\beta_1 + w \beta_1 \wedge d\beta_2 + w^2 \beta_1 \wedge d\beta_1
\label{lie_10}
\end{eqnarray}
Using $2^{th}$ Lie derivative $\Le_{\partial_w}^{2}(\Omega \wedge d\Omega) = 2 \beta_1 \wedge d\beta_1 =0 \Rightarrow w^2 \beta_1 \wedge d\beta_1 =0$. Again using Lie derivative $L_{\partial_{w}}$ in (\ref{lie_10}), we get
\begin{align}
\Le_{\partial_w}(\Omega \wedge d\Omega) = -dw \wedge d\beta_1 + \beta_2 \wedge d\beta_1 + \beta_1 \wedge d\beta_2 =0.
\label{eq11}
\end{align}
From (\ref{eq11}), we can deduce that $dw\wedge d\beta_1=0$, as
$\beta_2 \wedge d\beta_1 + \beta_1 \wedge d\beta_2$ remains unaffected by $dw$. Given that $dw\neq 0$, and $d\beta_1$ is independent of $w$, we can conclude that $d\beta_1 =0$, thus confirming the statement. Upon differentiating $\Omega$, we have
\begin{align}\label{eq8}
d\Omega =& - d\beta_2 - dw\wedge\beta_1
\end{align} and thus
\begin{align}\label{eq9}
\beta_1 \wedge \Omega = \beta_1 \wedge dw - \beta_1 \wedge \beta_2.
\end{align}
\par We will now prove that equations $(\ref{eq8})$ and $(\ref{eq9})$ are equivalent. This equivalence establishes that $\Omega$ has a transversely affine structure, consequently implying the same for  $\f$. 
Proving this equality requires demonstrating that $$d\beta_2 = \beta_1 \wedge \beta_2.$$
It follows from (\ref{eq11}) that  $\beta_2 \wedge d\beta_1 + \beta_1 \wedge d\beta_2 =0 \Rightarrow w\beta_2 \wedge d\beta_1 + w\beta_1 \wedge d\beta_2 =0$. Then we summarize $\Omega \wedge d\Omega$ in
$$\Omega \wedge d\Omega = -dw\wedge d\beta_2 + \beta_2 \wedge d\beta_2 - \beta_2 \wedge \beta_1 \wedge dw =0$$
of the above equal $\beta_2 \wedge d\beta_2 =0$, while the other terms are dependent on $dw$. Hence
\begin{align*}
(-d\beta_2 - \beta_2 \wedge \beta_1)\wedge dw =0 \Rightarrow  d\beta_2 + \beta_2 \wedge \beta_1 =0 \Rightarrow d\beta_2 = \beta_1 \wedge \beta_2.
\end{align*}
As a consequence of equations (\ref{eq8}) and (\ref{eq9}), we obtain
$$d\Omega=\beta_1\wedge \Omega.$$
This implies that $\tilde{\eta}$ has a transversely affine structure, consequently establishing the same for $\f$.  
\end{proof}

\begin{lemma}[Case 3]\label{caso3}
 Suppose that $F_4\equiv F_3 \equiv 0$, and $F_5 \not\equiv 0$. Then either $\f$ is transversely affine, or $\f$ is the pull-back by a rational map of a foliation on $\p^2$, or $\f$ is pure transversely projective.
\end{lemma}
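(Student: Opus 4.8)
The plan is to manufacture a \emph{finite} Godbillon--Vey sequence for $\tilde\f$, transfer it to $\f$, and then read off the conclusion from Theorem \ref{teogvs} together with the length-$\le 2$ criterion of Section 2. Imposing $F_3\equiv F_4\equiv 0$ in (\ref{eq_3}), the strict transform $\tilde\f$ is defined in the chart $(\tau,x)$ by
\[
\eta = x\theta_2 + x^2\theta_3 + x^3\theta_4 + x^4\theta_5 + x^2 F_5\,dx .
\]
The normalization of Lemma \ref{caso2} is unavailable here, since the coefficient of $dx$ is $x^2F_5$, which vanishes along the exceptional divisor; dividing by it and differentiating along $\partial_x$ raises the pole order at each step and gives an infinite sequence. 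The device is to pass to the other affine chart of the transverse line, i.e.\ to put $x=1/w$: clearing denominators (multiplying by $w^4$) turns $\eta$ into $w^3\theta_2 + w^2\theta_3 + w\theta_4 + \theta_5 - F_5\,dw$, whose $dw$-coefficient $-F_5$ no longer depends on the transverse variable. Dividing by $-F_5$ produces the meromorphic $1$-form
\[
\tilde\Omega = dw - w^3\delta_2 - w^2\delta_3 - w\delta_4 - \delta_5,\qquad \delta_j := \frac{\theta_j}{F_5}\ \ (2\le j\le 5),
\]
which defines $\tilde\f$, is polynomial of degree $3$ in $w$, and satisfies $i_{\partial_w}\tilde\Omega = 1$.

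Next I would read off the Godbillon--Vey sequence $\Omega_k := L_{\partial_w}^{k}(\tilde\Omega)$. As each $\delta_j$ depends only on $\tau$ and $L_{\partial_w}$ merely differentiates the coefficients in $w$, one obtains $\Omega_1 = -3w^2\delta_2 - 2w\delta_3 - \delta_4$, $\Omega_2 = -6w\delta_2 - 2\delta_3$, $\Omega_3 = -6\delta_2$ and $\Omega_4\equiv 0$. Thus $\tilde\f$ admits a finite G-V-S of length at most $3$, the length being exactly $3$ if and only if $\theta_2\not\equiv 0$. Since $\sigma$ is birational and $\tilde\f$ is the strict transform of $\f$, we may write $\f=(\sigma^{-1})^{*}\tilde\f$ and invoke Remark \ref{pull-back-gvs} to conclude that $\f$ itself carries a finite G-V-S of the same length on $\p^n$.

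I would then split according to the length. If $\theta_2\not\equiv 0$, the length is $3\ge 3$, so Theorem \ref{teogvs} applies to $\f$ on $\p^n$ with $n\ge 3$: either $\f$ is transversely affine, or $\f=\phi^{*}(\mathcal{G})$ for a foliation $\mathcal{G}$ on $\p^1\times\p^1$; as $\p^1\times\p^1$ is birational to $\p^2$, the second alternative exhibits $\f$ as the pull-back of a foliation on $\p^2$ by a rational map. If instead $\theta_2\equiv 0$, then $\Omega_3\equiv 0$ and the length is at most $2$, so by the criterion of Section 2 the foliation $\f$ is transversely projective outside a compact hypersurface; by definition this means that $\f$ is either transversely affine (when the last form of the projective triple vanishes) or pure transversely projective. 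Collecting the two branches yields exactly the stated trichotomy, the affine and projective structures being understood outside the compact hypersurface $\{F_5=0\}$ introduced by the normalization.

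The main obstacle is the first step. The simultaneous vanishing of $F_3$ and $F_4$ degenerates the transverse differential to $x^2F_5\,dx$ along the exceptional divisor, so the Lie-derivative scheme of Lemma \ref{caso2} diverges; one must recognize that the chart change $x\mapsto 1/w$ is precisely what restores a polynomial---hence terminating---sequence. A secondary point to isolate is the length-$2$ subcase $\theta_2\equiv 0\not\equiv\theta_3$: there one should check that the resulting projective structure need not reduce to an affine one, which is the genuinely new phenomenon in degree four (absent in degree three) responsible for the ``pure transversely projective'' alternative. For the lemma as stated, however, it suffices that ``transversely projective'' is by definition the disjunction of the affine and the pure projective cases.
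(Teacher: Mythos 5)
Your proof is correct and takes essentially the same route as the paper: the same inversion of the transverse coordinate producing the form $dz - z^3\theta_2/F_5 - z^2\theta_3/F_5 - z\theta_4/F_5 - \theta_5/F_5$ (the paper reaches it by dividing $\eta$ by $xF_5$ and then setting $x=1/z$), the same finite Godbillon--Vey sequence via Lie derivatives of length $3$ exactly when $\theta_2\not\equiv 0$, and the same application of Theorem \ref{teogvs} in that case. The only cosmetic difference is in the branch $\theta_2\equiv 0$, where you invoke the general length-$\le 2$ criterion from Section 2 while the paper renormalizes to $\tilde{\eta}=\eta/(x^2F_5)=\beta_0+x\beta_1+x^2\beta_2+dx$ and explicitly exhibits the projective triple $\bigl(\tilde{\eta},\,-L_{\partial_x}(\tilde{\eta}),\,-L^2_{\partial_x}(\tilde{\eta})\bigr)$, checking the structure equations by hand; both versions yield the same affine versus pure transversely projective dichotomy.
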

\begin{proof}
It follows from (\ref{eq_3}) that
\begin{eqnarray}\label{eq_20}
    \eta & = & x\theta_2 + x^{2} \theta_3 + x^3\theta_4 +x^4\theta_5 + x^{2}F_5(\tau,1)dx \\
    \tilde{\eta} & = & \frac{\eta}{x F_5(\tau,1)}= \frac{\theta_2}{F_5(\tau,1)} + \frac{x \theta_3}{F_5(\tau,1)}  + \frac{x^2\theta_4}{F_5(\tau,1)} + \frac{x^3\theta_5}{F_5(\tau,1)}  + xdx \nonumber
\end{eqnarray}
When we consider the birational map $\psi (\tau, z) = (\tau, \frac{1}{z})=(\tau,x)$, we obtain
\begin{eqnarray}
     \psi^{\ast}(\tilde{\eta})&=& \Bigg[\frac{\theta_2}{F_5} + \frac{ \theta_3}{zF_5}  + \frac{\theta_4}{z^2 F_5} + \frac{\theta_5}{z^3 F_5}  - \frac{1}{z^3}dz \Bigg] \nonumber \\
     \psi^{\ast}(\tilde{\eta})&=& -\frac{1}{z^3}\Bigg[-\frac{z^3\theta_2}{F_5} - \frac{z^2 \theta_3}{F_5} - \frac{z\theta_4}{F_5} - \frac{\theta_5}{ F_5}  + dz \Bigg] \nonumber \\
     -z^3\psi^{\ast}(\tilde{\eta})&=& \Bigg[z^3\left(\frac{ -\theta_2}{F_5}\right) + z^2 \left(\frac{- \theta_3}{F_5}\right) + z\left(\frac{-\theta_4}{ F_5}\right) + \left(\frac{-\theta_5}{ F_5}\right)  + dz \Bigg] \nonumber \\
     \zeta&:=& -z^3\psi^{\ast}(\tilde{\eta}) = \beta_0 + z \beta_1 + z^2 \beta_2 + z^3 \beta_3 + dz,\nonumber
\end{eqnarray}
where $\displaystyle \beta_0 = -\frac{\theta_5}{F_5(\tau,1)}$, $\displaystyle \beta_1 = -\frac{\theta_4}{F_5(\tau,1)}$, $ \displaystyle \beta_2 = -\frac{\theta_3}{F_5(\tau,1)}$, and $\displaystyle \beta_3 = -\frac{\theta_2}{F_5(\tau,1)}$. 
It is important to observe that $\zeta$ admits a finite G-V-S of length $\geq 3$ provide that $\beta_3 \not\equiv 0$. Consequently, $\f$ also holds this property. Therefore, in this case, we find that either $\f$ is transversely affine, or $\f$ is the pull-back by a rational map of a foliation on $\p^2$, in accordance with Theorem \ref{teogvs}.
\par On the contrary, let us assume that $\beta_3\equiv 0$, implying that $\theta_2\equiv 0$. It follows from (\ref{eq_20}) that
\begin{eqnarray}
    \eta & = & x^{2} \theta_3 + x^3\theta_4 +x^4\theta_5 + x^2 F_5(\tau,1) dx \nonumber\\
 \tilde{\eta} & = & \frac{\eta}{x^2 F_5(\tau,1)} =  \frac{\theta_3}{F_5(\tau,1)} + x\frac{\theta_4}{F_5(\tau,1)} +x^2\frac{\theta_5}{F_5(\tau,1)} + dx\nonumber
\end{eqnarray}
thus
\begin{eqnarray}\label{}
    \tilde{\eta} =\beta_0+x\beta_1+x^2\beta_2+dx
\label{formestrutproj}
\end{eqnarray}
where $\displaystyle \beta_j = \frac{\theta_{j+3}}{F_5(\tau,1)}$ for all $0\leq j\leq 2$. Since $i_{\partial_{x}}(\beta_j)=0$, and $L_{\partial_{x}}(\beta_j)=0$, for all $0\leq j\leq 2$, the integrability condition of $\tilde{\eta}$ implies
\[
\left\{\begin{array}{lll}
    d\beta_0 = \beta_0 \wedge \beta_1 \\
    d\beta_1 = 2\beta_0 \wedge \beta_2\\
    d\beta_2 = \beta_1 \wedge \beta_2
    \end{array}\right.\]
Now, we will proceed to prove the existence of $(\tilde{\eta}, \Omega, \xi)$ that establishes  
 a projective transverse structure for $\tilde{\eta}$, adhering to the condition
$$\left\{\begin{array}{lll}
    d\tilde{\eta} = \tilde{\eta} \wedge \Omega \\
    d\Omega = \tilde{\eta} \wedge \xi\\
    d\xi = \Omega \wedge \xi
    \end{array}\right.$$
By selecting $\Omega = \beta_1 + 2x\beta_2$, we arrive at
\begin{eqnarray}
    \tilde{\eta} \wedge \Omega &  = & (\beta_0 + x\beta_1 + x^2\beta_2 + dx)\wedge(\beta_1 + 2x\beta_2) \nonumber \\
   & = &\beta_0 \wedge \beta_1 + 2x\beta_0\wedge\beta_2+2x^2\beta_1\wedge\beta_2+x^2\beta_2\wedge\beta_1+dx\wedge\beta_1+2xdx\wedge\beta_2
   \nonumber \\
   & = & d\beta_0  +x d\beta_1+2x^2d\beta_2-x^2d\beta_2+dx\wedge\beta_1 +2xdx\wedge\beta_2\nonumber \\
   & = &  d\beta_0  +x d\beta_1+x^2d\beta_2+dx\wedge\beta_1 +2xdx\wedge\beta_2\nonumber \\
   & = & d\tilde{\eta}.
\end{eqnarray}
On the other hand, by computing $d\Omega$, we obtain
\begin{eqnarray}
    d\Omega & = & d\beta_1 +2dx \wedge \beta_2+2xd\beta_2 \nonumber \\
    & = & 2\beta_0 \wedge \beta_2 +2dx \wedge \beta_2 +2x\beta_1 \wedge \beta_2 \nonumber\\
   & = & \beta_0 \wedge (2\beta_2) + dx \wedge (2\beta_2) + x\beta_1 \wedge (2\beta_2) \nonumber \\
   & = & (\beta_0 + dx + x\beta_1 )\wedge (2\beta_2) \nonumber \\
   & = & (\beta_0 + dx + x\beta_1 + x^2\beta_2)\wedge (2\beta_2) \nonumber \\
   & = & \tilde{\eta}\wedge (2\beta_2).
\end{eqnarray}
Now, we choose $\xi= 2\beta_2$, which satisfies the following equality
\begin{eqnarray}
    d\xi &=& 2d\beta_2 = 2\beta_1 \wedge \beta_2 \nonumber \\
    &=& \beta_1\wedge(2\beta_2) \nonumber \\
    &=& (\beta_1+2x\beta_2)\wedge(2\beta_2) \nonumber \\
    &=&\Omega\wedge \xi.
\end{eqnarray}
It is worth that $\Omega$ is closed if, and only if $\beta_2\equiv 0$. This would lead to 
$\xi\equiv 0$, resulting in an affine transverse structure for $\tilde{\eta}$. On the contrary, when $\beta_2 \not\equiv 0$, $\tilde{\eta}$ admits a pure transversely projective structure. Since $\tilde{\f}$ is defined by $\tilde{\eta}$, we can deduce that either  $\f$ is transversely affine, or admits a pure transversely projective structure. 
\end{proof}
\begin{lemma}[Case 4]\label{caso4}
 Suppose that $F_3\equiv 0$, and $F_4 \not\equiv 0$. Then, either $\f$ is transversely affine, or $\f$ is the pull-back by a rational map of a foliation on $\p^2$.
\end{lemma}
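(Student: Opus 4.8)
The plan is to exhibit, in each of the two subcases $F_5\equiv 0$ and $F_5\not\equiv 0$, a finite Godbillon--Vey sequence of length exactly three for $\tilde{\f}$, and then to invoke Theorem \ref{teogvs} together with the birational invariance of G-V sequences (Remarks \ref{pull-back-gvs} and \ref{teo-rami}). First I would pass to the reduced strict transform: since $F_3\equiv 0$ and $F_4\not\equiv 0$, the $1$-form (\ref{eq_3}) factors as $\eta=x\,\eta'$ with
\[
\eta'=\theta_2+x\theta_3+x^2\theta_4+x^3\theta_5+(F_4+xF_5)\,dx,
\]
and $\eta'$ defines $\tilde{\f}$ and does not vanish on $\{x=0\}$ because $F_4\not\equiv 0$. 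I record for later use that $\theta_5\not\equiv 0$: the degree $5$ part $\alpha_5$ of a degree four foliation cannot vanish identically since $\gcd(A_0,\dots,A_n)=1$, and $i_R(\alpha_5)=0$ then forces $\sigma^{*}(\alpha_5)=x^6\theta_5$.

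When $F_5\equiv 0$ the coefficient of $dx$ is the monomial $F_4$, so I would simply divide by it, obtaining
\[
\frac{\eta'}{F_4}=\beta_0+x\beta_1+x^2\beta_2+x^3\beta_3+dx,\qquad \beta_j=\frac{\theta_{j+2}}{F_4},
\]
with all $\beta_j$ depending only on $\tau$. Using $\partial_x$ as transverse vector field produces the sequence $\eta_k=L_{\partial_x}^{k}(\eta'/F_4)$, which stops at $k=3$ with $\eta_3=6\beta_3$; since $\beta_3=\theta_5/F_4\not\equiv 0$ this is a G-V-S of length three.

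The subcase $F_5\not\equiv 0$ is where the real work lies, because now $F_4+xF_5$ is not a monomial and $\partial_x$ no longer closes up the sequence. I would remove this defect by a fibrewise birational change of the $\p^1$-coordinate carrying the unique root $x_0=-F_4/F_5$ of $F_4+xF_5$ to infinity, namely the composition of $x=1/z$, the shear $z=w-F_5/F_4$ (so that $dz=dw-d(F_5/F_4)$ and the coefficient of $dw$ becomes the monomial $wF_4$), and $w=1/s$. A direct computation then brings $\eta'$ to the shape
\[
\gamma_0+s\gamma_1+s^2\gamma_2+s^3\gamma_3+ds,\qquad \gamma_3=\frac{\theta_5-h\theta_4+h^2\theta_3-h^3\theta_2}{F_4},\qquad h=\frac{F_5}{F_4},
\]
again a candidate G-V-S of length $\le 3$ for $\partial_s$. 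The crucial point is that $\gamma_3\not\equiv 0$. Writing $\Lambda(x)=\theta_2+x\theta_3+x^2\theta_4+x^3\theta_5$ for the $\tau$-part of $\eta'$, one has $\gamma_3=-h^{3}\Lambda(x_0)/F_4$, so it suffices to see that $\Lambda(x_0)\not\equiv 0$; this is where I would use that $\sing(\tilde{\f})$ has codimension at least two. Along the hypersurface $H=\{F_4+xF_5=0\}=\{x=x_0\}$ the $dx$-component of $\eta'$ vanishes identically, whence $\eta'|_{H}=\Lambda(x_0)$; were $\Lambda(x_0)\equiv 0$, the whole form $\eta'$ would vanish along the codimension one set $H$, forcing $H\subset\sing(\tilde{\f})$, a contradiction. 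Hence $\gamma_3\not\equiv 0$ and $\tilde{\f}$ again carries a G-V-S of length three.

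In either subcase Theorem \ref{teogvs} now applies to the length three sequence: $\tilde{\f}$ is either transversely affine or a pull-back, by a rational map, of a foliation on $\p^1\times\p^1$. Transporting this through the birational morphism $\sigma$ and using that $\p^1\times\p^1$ is birational to $\p^2$ (together with Remarks \ref{pull-back-gvs} and \ref{teo-rami}), I conclude that $\f$ is transversely affine or $\f=\Phi^{*}(\mathcal{G})$ for some foliation $\mathcal{G}$ on $\p^2$. I expect the main obstacle to be exactly the non-monomial coefficient of $dx$ in the subcase $F_5\not\equiv 0$: once it is linearized by the Möbius change above, the length is forced to be three---never dropping to two---because the codimension two property of the singular set keeps the new leading coefficient $\gamma_3$ nonzero. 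This is precisely why, in contrast with Lemma \ref{caso3}, no pure transversely projective possibility arises here.
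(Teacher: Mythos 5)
Your proposal takes a genuinely different route from the paper's, and it is essentially sound, but it rests entirely on two nonvanishing claims whose justifications, as written, have real gaps. For comparison: the paper reaches the same normal forms in the same two subcases, but where you assert that the cubic coefficient can never vanish, the paper instead allows $\beta_3\equiv 0$ and eliminates that case by a long analysis (the integrating factor of $\alpha_2$ coming from $\alpha_2\wedge d\alpha_2=0$, the closed form $\tilde{\beta}_0$, the functions $g$ and $h$, Stein factorization, and a Riccati pull-back). Note that in the paper's Subcase 2 one has $\beta_3=-(F_5/F_4^{2})\,\Lambda(x_0)$, a unit multiple of your $\gamma_3$, so the paper's hypothesis $\beta_3\equiv 0$ is exactly your $\Lambda(x_0)\equiv 0$: if your claims are correctly justified, the paper's hard case is empty, the proof shortens considerably, and one gets the sharper structural remark that no pure transversely projective alternative can occur in Case 4 (in contrast with Lemma \ref{caso3}). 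What the paper's longer treatment buys is independence from these two delicate facts.

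First gap: the recorded statement that ``the degree $5$ part $\alpha_5$ of a degree four foliation cannot vanish identically since $\gcd(A_0,\dots,A_n)=1$'' is false. The gcd condition at the hyperplane at infinity of the chart says precisely that $\alpha_5\not\equiv 0$ \emph{or} $i_R(\alpha_4)\not\equiv 0$; it does not force $\alpha_5\not\equiv 0$. Indeed, even under the hypotheses of this lemma with $F_5\not\equiv 0$, the foliation defined by $\omega=d(f_4+f_5)$, with $f_4,f_5$ generic homogeneous of degrees $4,5$, has degree four, $F_3\equiv 0$, $F_4=4f_4\not\equiv 0$, $F_5=5f_5\not\equiv 0$, and $\alpha_5\equiv 0$, hence $\theta_5\equiv 0$. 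So $\theta_5\not\equiv0$ cannot be ``recorded for later use'' as a general fact. In your Subcase 1 the conclusion does hold, but only because there $F_5=i_R(\alpha_4)(\tau,1)\equiv 0$ in addition to $F_3\equiv 0$, so that $\alpha_5\equiv 0$ would make every homogeneous coefficient $A_i$ divisible by $z_0$, contradicting degree four; this is the same argument the paper uses in Lemma \ref{caso1}, and it is the argument you must give. (Fortunately your Subcase 2 does not use $\theta_5\not\equiv 0$, otherwise it would be irreparably broken by the example above.)

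Second gap: in Subcase 2 the inference ``$\eta'$ vanishes along $H$, forcing $H\subset\sing(\tilde{\f})$'' is not valid as stated, because the singular set of a foliation is computed from a \emph{saturated} defining form; if $\eta'$ vanished along $H$, the correct conclusion would merely be that an equation of $H$ divides $\eta'$ and that $\tilde{\f}$ is defined by the quotient form, which yields no contradiction by itself. What you need, and what is true, is that $\eta'$ is already saturated, i.e.\ its zero set has codimension $\geq 2$: any divisorial component $Z$ of $\{\eta'=0\}$ either equals $\{x=0\}$, which is excluded since $\eta'|_{x=0}=\theta_2+F_4\,dx\not\equiv 0$, or satisfies $Z\not\subset\{x=0\}$, in which case $\sigma(Z)$ is a codimension one subset of the zero set of $\omega$ (here $\sigma^{*}\omega=x^{3}\eta'$ and $\sigma$ is an isomorphism off the exceptional divisor), contradicting that $\omega$, being the restriction to the affine chart of the homogeneous form with $\gcd$ one, has zero set of codimension $\geq 2$. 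With this short lemma inserted, your contradiction, and hence $\gamma_3\not\equiv 0$, is correct. After these two repairs the proposal is a valid, and indeed shorter, proof of the lemma.
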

\begin{proof}
It follows from (\ref{eq_3}) that
$\eta= x\theta_2 + x^2 \theta_3 + x^3\theta_4 +x^4\theta_5 + (xF_4(\tau,1)+x^2F_5(\tau,1)) dx$.
We will divide the proof into two subcases, addressing each separately. Within each subcase, we will arrive at a 1-form that bears resemblance to
\begin{equation}
    \tilde{\eta} = \beta_0 + x \beta_1 + x^2 \beta_2 + dx
    \label{eq26}
\end{equation}
satisfying $\Le_{\partial_x} (\beta_j) = 0$, and $i_{\partial_x} (\beta_j) = 0$ for all $0\leq j\leq 2$. Having this information at our disposal, we will proceed to derive several of the anticipated results.\\
\textbf{Subcase I.} Suppose that $F_5\equiv 0$. Let $\tilde{\eta}=\frac{\eta}{xF_4}$, then 
\begin{eqnarray}
\tilde{\eta}&=& \frac{\theta_2}{F_4} + x\frac{\theta_3}{F_4} + x^2\frac{\theta_4}{F_4} +x^3\frac{\theta_5}{F_4} + dx\nonumber\\
\tilde{\eta}&=& \beta_0 + x\beta_1 + x^2\beta_2 +x^3\beta_3 + dx \label{eq_case4}
\end{eqnarray}
where $\beta_j = \frac{\theta_{j+2}}{F_4}$, $0\leq j\leq 3$. We can assume that $\beta_3\equiv 0$, because otherwise we would get a finite G-V-S of length $\geq 3$, and in this case, by applying Theorem \ref{teogvs}, either $\f$ is transversely affine, or it is a pull-back by a rational map of a foliation on $\p^2$. \\
\textbf{Subcase II.} Suppose that $F_5 \not\equiv 0$. Let $\tilde{\eta}=\frac{\eta}{x}$, then
$$\tilde{\eta}= \theta_2 + x\theta_3 + x^2 \theta_4 + x^3 \theta_5 + (F_4 + xF_5) dx$$
Consider the birational map $\psi (\tau, z) = \left(\tau,  \frac{\frac{F_4(\tau,1)}{F_5(\tau,1)}z}{1-z}\right)=(\tau,x)$ with inverse $\psi^{-1}(\tau,x)=\left(\tau,\frac{x}{x+\frac{F_4(\tau,1)}{F_5(\tau,1)}}\right)$. A straightforward computation gives  $\psi^{\ast}(\Tilde{\eta})=\frac{F_4^2}{F_5(1-z)^3}\bar{\eta}$, where
$$\bar{\eta}=\beta_0 + z\beta_1 + z^2 \beta_2 + z^3 \beta_3 + dz$$ with 
\begin{eqnarray*}
    \beta_0&=&\frac{F_5 \theta_2}{F_{4}^{2}}\\
\beta_1&=& -3\frac{F_5 \theta_2}{F_{4}^{2}} + \frac{\theta_3}{F_4} + \frac{dF_4}{F_4} - \frac{dF_5}{F_5} \\
\beta_2&=&3\frac{F_5 \theta_2}{F_{4}^{2}} -2\frac{\theta_3}{F_4} + \frac{\theta_4}{F_5} - \frac{dF_4}{F_4} + \frac{dF_5}{F_5} \\
\beta_3&=&-\frac{F_5 \theta_2}{F_{4}^{2}} + \frac{\theta_3}{F_4} - \frac{\theta_4}{F_5} + \frac{F_4 \theta_5}{F_{5}^{2}}
\end{eqnarray*}
As in the previous subcase we can assume that $\beta_3 \equiv 0$ by Theorem \ref{teogvs}.
\par We will now work both cases at the same time as they both fell into the form (\ref{eq_case4}). Remembering that in the initial blow-up we have $\pi^{\ast}(\alpha_2) = x^2[x \theta_2 + F_3(\tau, 1)dx] = x^3 \theta_2$. 
When $\alpha_2 \equiv 0$, we can establish that $\beta_0 \equiv 0$. In this situation, $\eta$ takes the form of $\eta = x\beta_1 + x^2 \beta_2 + dx$, aligning with the conditions of Lemma \ref{caso2}, so let us assume $\alpha_2 \not\equiv 0 $. The integrability of $\omega=\alpha_2 + \alpha_3 + \alpha_4 + \alpha_5$ gives us
\begin{eqnarray}
    0=\omega\wedge d\omega & = & \alpha_2 \wedge d\alpha_2 +\alpha_2 \wedge d\alpha_3 + \alpha_2 \wedge d\alpha_4  + \alpha_2 \wedge d\alpha_5 + \nonumber \\
    & &\alpha_3 \wedge d\alpha_2 + \alpha_3 \wedge d\alpha_3 + \alpha_3 \wedge d\alpha_4 + \alpha_3 \wedge d\alpha_5 +\nonumber \\
    & &\alpha_4 \wedge d\alpha_2 + \alpha_4 \wedge d\alpha_3 + \alpha_4 \wedge d\alpha_4 + \alpha_4 \wedge d\alpha_5+ \nonumber \\
    &  &\alpha_5 \wedge d\alpha_2 + \alpha_5 \wedge d\alpha_3 + \alpha_5 \wedge d\alpha_4 + \alpha_5 \wedge d\alpha_5
\end{eqnarray}
The coefficients of $\alpha_j$ are homogeneous polynomials of degree $j$, then the coefficients of $\alpha_2 \wedge d \alpha_2$ are homogeneous polynomials of degree $3$ and none of the other 2-forms in the other parcels have coefficients of degree 3, then $\alpha_2 \wedge d\alpha_2 =0$. In particular, either $cod(Sing(\alpha_2)) \geq 2$ and $\alpha_2$ define a degree one foliation in $\p^{n-1}$; or $\alpha_2 = h \alpha_1$, where $\alpha_1$ defines a degree zero foliation in $\p^{n-1}$ ($\alpha_2 = h \alpha_1$, in this case $cod(Sing(\alpha_2)) < 2$, and the foliation needs to be saturated and therefore what is left is a 1-form polynomial of degree 1. In both cases $\alpha_2$ has an integrating factor, that is, there exists a function $f$ such that $d(f^{-1}\alpha_2)=0$.
\par In \textbf{Subcase I}, we have
\begin{align}
    \pi^{\ast}\Bigg(\frac{\alpha_2}{f}\Bigg) = \frac{\theta_2}{f(\tau,1)} \Rightarrow d \Bigg(\frac{\theta_2}{f(\tau,1)} \Bigg) = 0 \Rightarrow d \Bigg(\frac{F_4 (\tau,1)}{f(\tau,1)}\beta_0 \Bigg) = 0
    \label{eq29}
\end{align}
We put $\displaystyle F_1(\tau):= \frac{f(\tau,1)}{F_4(\tau,1)}$ for this case.
\par In \textbf{Subcase II}, we have
\begin{align}
    \pi^{\ast}\Bigg(\frac{\alpha_2}{f}\Bigg) = \frac{\theta_2}{f(\tau,1)} \Rightarrow d \Bigg(\frac{\theta_2}{f(\tau,1)} \Bigg) = 0 \Rightarrow d \Bigg(\frac{F_{4}^{2} (\tau,1)}{F_5 (\tau,1) f(\tau,1)}\beta_0 \Bigg) = 0
    \label{eq30}
\end{align}
We put $\displaystyle F_2(\tau):= \frac{F_5 (\tau,1) f(\tau,1)}{F_{4}^{2}(\tau,1)}$ for this case.
Now, let us consider the birational map $\Phi_i(\tau,z) = (\tau, F_i(\tau)z)=(\tau, x)$, $i=1,2$. If $\tilde{\eta}$ is the one that appears in \textbf{Subcase I}, just use $\Phi_1$, if it is $\tilde{\eta}$ in \textbf{Subcase II}, just use $\Phi_2$, so we will omit the indices $i$ of $\Phi_i$ and $F_i$ because the calculation we will perform using these transformations in one case is identical to the other. If $\tilde{\eta}$ is like in $(\ref{eq_case4})$, a straightforward calculation gives $\Phi^{*}(\tilde{\eta})=F\bar{\eta}$, where 
\[\bar{\eta}=dz+\tilde{\beta}_0+z\tilde{\beta}_1+z^2\tilde{\beta}_2\] with 
$\tilde{\beta}_0=\frac{\beta_0}{F}$, $\tilde{\beta}_1=\beta_1+\frac{dF}{F}$, and $\tilde{\beta}_2=F\beta_2$. 
Let us consider the birational map $\phi(\tau, w) = (\tau, \frac{1}{w}) = (\tau, z)$. Then
$\phi^{\ast}(\bar{\eta})=-w^2 \hat{\eta}$,
where $\hat{\eta} = dw - \Tilde{\beta_2} - w\Tilde{\beta_1} - w^2 \Tilde{\beta_0}$. 
Since $i_{\partial_{w}}(\tilde{\beta}_j)=0$ and $L_{\partial_{w}}(\tilde{\beta}_j)=0$, for all $0\leq j\leq 2$, the integrability of $\hat{\eta}$ implies
\begin{equation}
\left\{\begin{array}{lll}
    d\Tilde{\beta_0} = \Tilde{\beta_0} \wedge \Tilde{\beta_1} \\
    d\Tilde{\beta_1} = 2\Tilde{\beta_0} \wedge \Tilde{\beta_2}\\
    d\Tilde{\beta_2} = \Tilde{\beta_1} \wedge \Tilde{\beta_2}
    \end{array}\right.
    \label{eq31}
\end{equation}
From (\ref{eq29}) and (\ref{eq30}), we get $d\Tilde{\beta_0} =0$, and by the first equation in (\ref{eq31}) we obtain $\Tilde{\beta_0} \wedge \Tilde{\beta_1} =0$. Denoting by $\mathcal{M}_{k}$ the set of meromorphic functions in $\p^{k}$. It follows from $\Tilde{\beta_0} \wedge \Tilde{\beta_1} =0$ that there exists $g \in \mathcal{M}_{n-1}$ such that $\Tilde{\beta_1} = g\Tilde{\beta_0}$. The second relation in (\ref{eq31}) gives us
\begin{align}
    d\Tilde{\beta_1} = dg \wedge \Tilde{\beta_0} = 2 \Tilde{\beta_0} \wedge \Tilde{\beta_2} & \Rightarrow dg \wedge \Tilde{\beta_0} + 2 \Tilde{\beta_2} \wedge \Tilde{\beta_0}=0 \nonumber \\
    & \Rightarrow (dg + 2\Tilde{\beta_2})\wedge \Tilde{\beta_0} =0. \nonumber
\end{align}
Therefore, there exists $h \in \mathcal{M}_{n-1}$ such that $$\displaystyle \frac{dg}{2} + \Tilde{\beta_2} = h \Tilde{\beta_0} \Rightarrow \Tilde{\beta_2} = h \Tilde{\beta_0} - \frac{dg}{2}.$$ The third relation in (\ref{eq31}) implies
\begin{align}
    d \Tilde{\beta_2} = dh \wedge \Tilde{\beta_0} = & \Tilde{\beta_1} \wedge \Tilde{\beta_2} = g\Tilde{\beta_0} \wedge \Bigg(h \Tilde{\beta_0} -  \frac{dg}{2}\Bigg) = g \frac{dg}{2} \wedge \Tilde{\beta_0} \nonumber \\
    \Rightarrow & dh \wedge \Tilde{\beta_0} = g \frac{dg}{2} \wedge \Tilde{\beta_0} \Rightarrow \Bigg(dh - g \frac{dg}{2} \Bigg) \wedge \Tilde{\beta_0} = 0 \nonumber \\
    \Rightarrow & d \Bigg(h - \frac{g^2}{4} \Bigg) \wedge \Tilde{\beta_0} = 0.
\label{eq32}
\end{align}
We will designate $\mathcal{G}$ as the foliation generated by $\Tilde{\beta_0}$ in $\p^{n-1}$. Notably, $\Tilde{\beta_0}$ gives rise to a foliation in $\p^{n-1}$, due to its definition within $\C^{n-1}$, and additionally, it is closed. 

We have two possibilities:\\
\textbf{Possibility I.} $\mathcal{G}$ has no non-constant first integral. We claim that $\omega$ has an integral factor. In fact, by (\ref{eq32}) $$ d \left(h - \frac{g^2}{4} \right) \wedge \Tilde{\beta_0} = 0 \Rightarrow h= \frac{g^2} {4} +c,$$ where $c \in \C$, otherwise $\mathcal{G}$ would have non-constant first integral. We can then write
\begin{eqnarray}
    \hat{\eta} &=& dw - \Tilde{\beta_2} - w\Tilde{\beta_1} - w^2 \Tilde{\beta_0} = dw - \left(\frac{g^2}{4} + c\right)\Tilde{\beta_0} - \frac{dg}{2} - g\Tilde{\beta_0}w - \Tilde{\beta_0}w^2 \nonumber \\
    &=& dw - \frac{dg}{2} - \left(\frac{g}{4} +c + gw +w^2\right)\Tilde{\beta_0} \nonumber \\
    &=& d\left(w - \frac{g}{2}\right) - \left(\left[w + \frac{g}{2}\right]^2 +c\right)\Tilde{\beta_0},
\end{eqnarray}
in particular, if we set $\mathcal{P} := \left(\left[w + \frac{g}{2}\right]^2 +c\right)^{-1}\hat{\eta}$, then
$$\mathcal{P} := \frac{d(w - \frac{g}{2})}{\left[w + \frac{g}{2}\right]^2 +c} - \Tilde{\beta_0} \Rightarrow  d\Tilde{\beta_0} =0.$$
Therefore $\hat{\eta}$ has an integral factor and thus $\omega$ as well. Then there exists $h$ such that
\begin{align}
    d(h\omega) = 0 &\Rightarrow dh \wedge \omega + h d\omega = 0 \nonumber \\
    & \Rightarrow -\frac{dh}{h} \wedge \omega = dw.
\end{align}
Consequently, $\f$ is transversely affine.\\
\textbf{Possibility II.} $\mathcal{G}$ has non-constant first integral. We assert that $\f$ is the pull-back of a Riccati equation on $\p^1 \times \p^1$ by a birational map.
Indeed, by Stein's Factorization Theorem $\mathcal{G}$ has a meromorphic first integral, say $f$, with connected fibers: if $\phi \in \mathcal{M}_{n-1}$ and $d\phi \wedge df =0$ then there exists $\psi \in \mathcal{M}_{1}$ such that $\phi = \psi(f)$ where $\psi (f) = \psi \circ f $. \\
On the other hand, the relation (\ref{eq32}) implies that there exists $\phi_2 \in \mathcal{M}_{1}$ such that 
$$d\left(h - \frac{g^2}{4} \right)\wedge \Tilde{\beta_0} = 0 \Rightarrow d\left(h - \frac{g^2}{4} \right)\wedge\phi_1 df =0 \Rightarrow d\left(h - \frac{g^2}{4} \right)\wedge df =0$$
and by Stein's Factorization Theorem 
$$h - \frac{g^2}{4} = \psi_2 (f) \Rightarrow h = \psi_2 (f) + \frac{g^2}{4},$$ replacing in $\hat{\eta}$ we have
\begin{eqnarray}
    \hat{\eta} &= & dw - \left(h \Tilde{\beta_0} - \frac{dg}{2}\right) - w (g\Tilde{\beta_0}) - \Tilde{\beta_0}w^2 \nonumber \\
    & = & d \left(w - \frac{g}{2}\right) + (-h -wg -w^2) \Tilde{\beta_0} \nonumber \\
    & = & d \left(w - \frac{g}{2}\right) - \left(\frac{g^2}{4} + \psi_2 (f) + wg + w^2\right) \psi_1 (f) df \nonumber \\
    & = & d \left(w - \frac{g}{2}\right) - \left(\left[w + \frac{g}{2}\right]^2  + \psi_2 (f)\right) \psi_1 (f) df
\end{eqnarray}
Consider the rational map $\Phi_1 : \p^{n-1} \times \p^1 \dashrightarrow \p^1 \times \p^1$ given by $$\Phi_1 (\tau, w) = \left( f(\tau), w - \frac{g(\tau)}{2}\right)=: (x,y)$$ Then $\hat{\eta} = \Phi^{\ast}(\theta)$, where
$\theta = dy - (y^2 + \psi_2 (x))\psi_1 (x) dx$.
Note that $\theta$ is an integrable 1-form that defines a Riccati equation in $\p^1 \times \p^1$ and furthermore, it has a transversely affine structure as we wanted, then $\f$ is transversely affine.
\end{proof}

\begin{lemma}[Case 5]
 Suppose that $F_5\equiv 0$, and $F_3 \not\equiv 0 \not\equiv F_4$. Then, there exists a birational map $\Psi:\mathbb{P}^{n-1}\times\mathbb{P}^{1}\dashrightarrow \mathbb{P}^{n}$ such that the foliation $\Psi^{*}(\f)$ is defined by a 1-form described as follows:
\[\eta_t = \beta_0 + t\beta_1 + t^2 \beta_2  + t^3 \beta_3 + t^4 \beta_4 -tdt,\]
where the 1-forms $\beta_j$ do not depend on $t\in\mathbb{P}^{1}$, for all $0\leq j\leq 4$.
\label{caso5}
\end{lemma}
\begin{proof}
We consider $(\tau,x)\in\C^{n-1}\times\mathbb{C}\subset\mathbb{P}^{n-1}\times\mathbb{P}^{1}$ as outlined in (\ref{explosion}), so that the blowing-up $\sigma$ extends to a birational map $\psi_1:\mathbb{P}^{n-1}\times\mathbb{P}^{1}\dashrightarrow \mathbb{P}^{n-1}\times\mathbb{P}^{1}$. If $F_5\equiv 0$,
it can be inferred from (\ref{eq_3}) that
    \begin{equation}\label{case5_eta}
    \eta = x\theta_2 + x^2\theta_3 + x^3\theta_4 + x^4\theta_5 + (F_3(\tau,1) + xF_4(\tau,1))dx, 
    \end{equation}
    and the pull-back foliation $\psi_1^{*}(\f)$ is induced by $\eta$. 
    We will perform a series of pull-backs by birational maps on $\eta$ until the desired result is achieved. To begin, we initiate the pull-back by applying the birational map $\psi_z:\mathbb{P}^{n-1}\times\mathbb{P}^{1}\dashrightarrow \mathbb{P}^{n-1}\times\mathbb{P}^{1}$ defined as 
     $\psi_z (\tau, z) =  (\tau, \frac{1}{z}) = (\tau, x)$. Consequently, $\psi_{z}^{\ast} (\eta)=\frac{\eta_z}{z^4}$, where
\[\eta_z = \theta_5 + z \theta_4 + z^2 \theta_3 + z^3 \theta_2 - (zF_4(\tau,1) + z^2 F_3(\tau,1))dz.\]
Continuing, we consider the birational map $\psi_t:\mathbb{P}^{n-1}\times\mathbb{P}^{1}\dashrightarrow \mathbb{P}^{n-1}\times\mathbb{P}^{1}$ defined as 
$\displaystyle \psi_t (\tau, t) = \left(\tau,\frac{\frac{F_4(
\tau,1)}{F_3(\tau,1)}t}{(1-t)}\right)=(\tau,z)$. A direct calculation yields 
$\psi^{*}_t(\eta_z)=\frac{F_4^{3}}{(1-t)^4F_3^2}\eta_t$, where
\[\eta_t = \beta_0 + t\beta_1 + t^2 \beta_2  + t^3 \beta_3 + t^4 \beta_4 -tdt\]
with 
\begin{eqnarray*}
\beta_0 &=& \frac{F_{3}^{2}}{F_{4}^{3}}\theta_5\\ 
\beta_1 &=& -4\frac{F_{3}^{2}}{F_{4}^{3}}\theta_5 +\frac{F_3}{F_{4}^{2}} \theta_4 \\
\beta_2 &=& 6\frac{F_{3}^{2}}{F_{4}^{3}}\theta_5 -3 \frac{F_3}{F_{4}^{2}} \theta_4 + \frac{\theta_3}{F_{4}} - \frac{dF_4}{F_{4}} + \frac{dF_3}{F_{3}} \\
\beta_3 &=& -4\frac{F_{3}^{2}}{F_{4}^{3}}\theta_5 +3 \frac{F_3}{F_{4}^{2}} \theta_4 -2 \frac{\theta_3}{F_{4}} +2 \frac{dF_4}{F_{4}} + \frac{dF_3}{F_{3}} + \frac{\theta_2}{F_{3}}\\
\beta_4 &=& \frac{F_{3}^{2}}{F_{4}^{3}}\theta_5 - \frac{F_3}{F_{4}^{2}} \theta_4 + \frac{\theta_3}{F_{4}} - \frac{\theta_2}{F_{3}}
\end{eqnarray*}
The proof concludes by observing that the 1-forms $\beta_j$ are dependent solely on $\tau$, for all $0\leq j\leq 4$, and taking $\Psi:=\psi_1\circ\psi_z\circ\psi_t$.
\end{proof}

\begin{lemma}[Cases 6 and 7]\label{caso6}
 Suppose that $F_4\equiv 0$, and $F_3 \not\equiv 0$, $F_5\not\equiv 0$; or $F_3 \not\equiv 0$, $F_4 \not\equiv 0$, and $F_5 \not\equiv 0$. Then, the foliation $\f$ is either transversely affine or is the pull-back by a rational map of a foliation on $\p^2$, or
 there exists a birational map $\Psi:\mathbb{P}^{n-1}\times\mathbb{P}^{1}\dashrightarrow \mathbb{P}^{n}$ such that the foliation $\Psi^{*}(\f)$ is defined by a 1-form described as follows:
\[\eta_t = \beta_0 + t\beta_1 + t^2 \beta_2  + t^3 \beta_3 + t^4 \beta_4 -tdt,\]
where the 1-forms $\beta_j$ do not depend on $t\in\mathbb{P}^{1}$, for all $0\leq j\leq 4$. 
\end{lemma}
\begin{proof}
As in Lemma \ref{caso5}. the map $\sigma$ extends to a birational map $\psi_1:\mathbb{P}^{n-1}\times\mathbb{P}^{1}\dashrightarrow \mathbb{P}^{n-1}\times\mathbb{P}^{1}$. Then, following from equation (\ref{eq_3}), the foliation $\psi^{*}(\f)$ is defined by the 1-form: 
\begin{align}
    \eta = x\theta_2 + x^2\theta_3 + x^3\theta_4 + x^4\theta_5 + (F_3(\tau,1) + x F_4(\tau,1) + x^2 F_5(\tau,1))dx.
\end{align}
Since $F_5 \not\equiv 0$, we can divide the expression of $\eta$ by $F_5(\tau,1)$ and obtain:
\begin{align}
    \eta_1 = x\alpha_2 + x^2\alpha_3 + x^3\alpha_4 + x^4\alpha_5 + \left(\frac{F_3(\tau,1)}{F_5(\tau,1)} + x \frac{F_4(\tau,1)}{F_5(\tau,1)} + x^2\right)dx, 
\end{align}
where $\alpha_j=\theta_j/F_{5}(\tau,1)$, for all $2\leq j\leq 5$.
Now, we factorize the polynomial 
\[x^2+x \frac{F_4(\tau,1)}{F_5(\tau,1)}+\frac{F_3(\tau,1)}{F_5(\tau,1)}=(x-c_1(\tau))(x-c_2(\tau)).\] 
Note that $c_1(\tau)$ and $c_2(\tau)$ are not identically zero, as assumed from the hypotheses $F_3\not\equiv 0$, $F_5 \not\equiv 0$, and $c_1(\tau)\cdot c_2(\tau)=\frac{F_3(\tau,1)}{F_5(\tau,1)}$.
\par First, we consider the birational map $\psi_z:\mathbb{P}^{n-1}\times\mathbb{P}^{1}\dashrightarrow \mathbb{P}^{n-1}\times\mathbb{P}^{1}$ defined by
     $\psi_z (\tau, z) =\left(\tau,\frac{c_1(\tau) z}{z-1}\right)=(\tau,x)$. A direct calculation yields $\psi_z^{*}(\eta_1)=\frac{c_1^2(\tau)}{(z-1)^4}\eta_z,$ where 
     \begin{equation}\label{case6_7}
     \eta_z=z\gamma_1+z^2\gamma_2+z^3\gamma_3+z^4\gamma_4-[(c_1(\tau)-c_2(\tau))z+c_2(\tau)]dz
     \end{equation}
  with 
  \begin{eqnarray*}
  \gamma_1&=&\frac{\alpha_2}{c_1(\tau)}-c_2(\tau)\frac{dc_1(\tau)}{c_1(\tau)}\\
  \gamma_2&=&\frac{3\alpha_2}{c_1(\tau)}+\alpha_3+(2c_2(\tau)-c_1(\tau))\frac{dc_1(\tau)}{c_1(\tau)}\\
  \gamma_3&=&\frac{-3\alpha_2}{c_1(\tau)}-2\alpha_3-c_1(\tau)\alpha_4+(c_1(\tau)-c_2(\tau))\frac{dc_1(\tau)}{c_1(\tau)}\\
  \gamma_4&=&\frac{\alpha_2}{c_1(\tau)}+\alpha_3+c_1(\tau)\alpha_4+c_1^{2}(\tau)\alpha_5
  \end{eqnarray*}   
  Now we will analyze the expression of $\eta_z$ in (\ref{case6_7}), observing that we have the following subcases:\\
  \textbf{Subcase I.} $c_1(\tau)=c_2(\tau)$. In this situation, we have that 
\begin{equation}
     \eta_z=z\gamma_1+z^2\gamma_2+z^3\gamma_3+z^4\gamma_4+c_2(\tau)dz
     \end{equation}
 Since $c_2(\tau)$ is not identically zero, we can divide $\eta_z$ by $c_2(\tau)$ and obtain a 1-form $\tilde{\eta}$ that is equivalent to the 1-form derived in equation (\ref{eq27}) of Lemma \ref{caso2}. Hence, we can conclude that the foliation $\f$ is either transversely affine or is the pull-back by a rational map of a foliation on $\p^2$.\\
   \textbf{Subcase II.} $c_1(\tau)\neq c_2(\tau)$.
 In this subcase, the 1-form $\eta_z$ is equivalent to the 1-form derived in equation (\ref{case5_eta}) of Lemma \ref{caso5}. Therefore, we can conclude that 
 there exists a birational map $\Psi:\mathbb{P}^{n-1}\times\mathbb{P}^{1}\dashrightarrow \mathbb{P}^{n-1}\times\mathbb{P}^{1}$ such that the foliation $\Psi^{*}(\f)$ is defined by a 1-form described as follows:
\[\eta_t = \beta_0 + t\beta_1 + t^2 \beta_2  + t^3 \beta_3 + t^4 \beta_4 -tdt,\]
where the 1-forms $\beta_j$ do not depend on $t\in\mathbb{P}^{1}$, for all $0\leq j\leq 4$.
 \end{proof}
\par We can condense the results obtained in the above lemmas in the following proposition:
\begin{proposition}
In the above situation, we have the five possibilities:
\begin{itemize}
    \item[(i)] either $\f$ is transversely affine outside a compact hypersurface;
    \item[(ii)] or $\f$ is pure transversely projective outside a compact hypersurface;
    \item[(iii)] or $\f$ is a pull-back by a rational map of a foliation on $\p^2$;
    \item[(iv)] or $\f$ is a pull-back by a linear map $\pi : \p^n \to \p^{n-1}$ of a foliation of degree four on $\p^{n-1}$.
    \item[(v)] or there exists a birational map $\Psi:\mathbb{P}^{n-1}\times\mathbb{P}^{1}\dashrightarrow \mathbb{P}^{n}$ such that the foliation $\Psi^{*}(\f)$ is defined by a 1-form described as follows:
\[\eta_t = \beta_0 + t\beta_1 + t^2 \beta_2  + t^3 \beta_3 + t^4 \beta_4 -tdt,\]
where the 1-forms $\beta_j$ do not depend on $t\in\mathbb{P}^{1}$, for all $0\leq j\leq 4$.
\end{itemize}
In particular, if $n=3$ then $\f$ satisfies $(i)$, $(ii)$, $(v)$, or $(iii)$.
\label{aprop}
\end{proposition}
\subsection{End of the proof of Theorem \ref{teoprincip}}
We give the proof by induction on the dimension $n\geq 3$. If $n=3$, then Theorem \ref{teoprincip} follows from Corollary \ref{cor1} and Proposition \ref{aprop}. Let us assume that Theorem \ref{teoprincip} is true for $n-1\geq 3$ and prove that it holds for $n$.
\par Let $\f$ be a codimension one foliation of degree four on $\p^n$, $n\geq 4$. It follows from Corollary \ref{cor1} and Proposition \ref{aprop} that, either $\f$ satisfies one of the conclusions of Theorem \ref{teoprincip}, or $\f$ is the pull-back by a linear map $\pi : \p^n \to \p^{n-1}$ of a foliation $\f_{n-1}$ of degree four on $\p^{n-1}$. In this last case, as Theorem \ref{main_theo} holds true for $n-1$, it follows that one the five possibilities outlined below must also be true:
\begin{enumerate}
    \item[(i)] $\f_{n-1}$ has a rational first integral, say $F:\p^{n-1}\dashrightarrow\p^1$. In this case, $F\circ\pi$ is a rational first integral of $\f$ and we are done.
    \item[(ii)] $\f_{n-1}$ is transversely affine. In this case, $\f_{n-1}$ admits a G-V-S of length one. Hence, $\f$ also admits a G-V-S of length one by Remark \ref{pull-back-gvs}.  \item[(iii)] $\f_{n-1}$ is transversely projective. In this case, $\f_{n-1}$ admits a G-V-S of length two. Hence, $\f$ also admits a G-V-S of length two by Remark \ref{pull-back-gvs}.
    \item[(iv)] $\f_{n-1}=\Phi^{*}(\mathcal{G})$, where $\mathcal{G}$ is a foliation on $\p^2$ and $\Phi:\p^{n-1}\dashrightarrow\p^2$ a rational map. In this case, we get $\f=(\Phi\circ\pi)^{*}(\mathcal{G})$ and we are done.  
    \item[(v)] There exists a birational map $\Psi_1:\mathbb{P}^{n-2}\times\mathbb{P}^{1}\dashrightarrow \mathbb{P}^{n-1}$ such that the foliation $\Psi_1^{*}(\f_{n-1})$ is defined by a 1-form described as follows:
\[\eta_t = \beta_0 + t\beta_1 + t^2 \beta_2  + t^3 \beta_3 + t^4 \beta_4 -tdt,\]
where the 1-forms $\beta_j$ do not depend on $t\in\mathbb{P}^{1}$, for all $0\leq j\leq 4$. Now consider any rational map $\phi:\p^{n-1}\times\p^{1}\dashrightarrow\p^{n-2}\times\p^1$ such that it fixes the variable at $\p^1$, and choose any birational map $\Psi$ such that $\pi\circ\Psi=\Psi_1\circ\phi$. With this, we have that 
\[\Psi^{*}(\f)=\Psi^{*}(\pi^{*}(\f_{n-1}))=(\pi\circ\Psi)^{*}(\f_{n-1})=(\Psi_1\circ\phi)^{*}(\f_{n-1})=\phi^{*}(\Psi_1^{*}(\f_{n-1}))\]
Using the fact that $\phi$ fixes the variable over $\p^1$, we deduce that $\Psi^{*}(\f)$ is defined by a 1-form similar to item $(v)$. This concludes the proof of Theorem \ref{teoprincip}.
\end{enumerate}

\section{Proof of Theorem \ref{teorema_B}}\label{prova_B}
\par Let $\f$ be a codimension one holomorphic foliation of degree $d\geq 4$ in $\p^n$, $n \geq 3$. 
Suppose that one of the two conditions is satisfied: 
 \begin{enumerate}
 \item for all $p\in\sing(\f)$, we have $\mathcal{J}(\f,p)=1$;
 \item there exists $p\in\sing(\f)$ such that $\mathcal{J}(\f,p)\geq d-1$.
 \end{enumerate}
 In the first case, $\f$ admits a rational first integral by invoking Corollary \ref{cor1}. This consequently establishes the validity of assertion $(i)$ within Theorem \ref{teorema_B}.
\par Therefore, we shall assume that there exists a point $p \in \p^n$ such that $\mathcal{J}(\f,p) \geq  d-1$. 
By employing affine coordinates $(z_1,\ldots,z_n)\in \C^n \subset \p^n$, where $p=0 \in \C^n$, we can conveniently consider $\f|_{\C^n}:\omega=0$, where $\omega$ is a polynomial 1-form in $\C^n$ expressed as follows: 
\begin{equation}
\omega = \alpha_{d-1} + \alpha_{d} + \alpha_{d+1},
\end{equation} 
here, $\alpha_j$ corresponds to  homogeneous polynomial 1-forms of degree $j$, $d-1\leq j\leq d+1$, and 
\begin{eqnarray}\label{eq_R}
i_R (\alpha_{d+1})=0,\quad \quad\text{with}\quad R = \sum_{i=1}^{n} z_i \partial z_i.
\end{eqnarray}
Once again, we will express $\alpha_j$ as: $\displaystyle \alpha_j (z) := \sum_{i=1}^{n} P_{ji}(z)dz_i$, with $d-1\leq j\leq d+1$. Additionally, we  introduce
\[F_j (z) :=i_{R}(\alpha_{j-1})=\displaystyle \sum_{i=1}^{n}z_{i}\cdot P_{j-1 i}(z),\quad d-1\leq j\leq d+1,\] where $P_{j-1 i}$ are homogeneous polynomials of degree $j-1$. Note that $F_{d+2} \equiv 0$ by (\ref{eq_R}). 
\par We proceed to examine the pull-back of 
$\omega$ through the process of  blowing-up of $\p^n$ at $0 \in \C^n \subset \p^n$. Let $\sigma : \tilde{\p}^n \rightarrow \p^n$ denote the blow-up at $0 \in \C^n\subset\p^n$, and let $\tilde{\f}$ represent the strict transform of $\f$ by $\sigma$. Our objective is to calculate $\sigma^{\ast}( \omega)$ within the chart
\begin{align}\label{explosion}
(\tau_1, \ldots, \tau_{n-1},x)=(\tau,x) \in \C^{n-1}\times \C \mapsto (x \tau,x) = (z_1, \ldots,z_n) \in \C^n \subset \p^n. 
\end{align}
 We have 
\begin{eqnarray}
\sigma^{\ast}(\omega) &= & x^{d-1}(x\theta_{d-1}+x^2\theta_{d}+x^3\theta_{d+1}+(F_d(\tau,1)+x F_{d+1}(\tau,1)+x^2 F_{d+2}(\tau,1)))\nonumber
\end{eqnarray}
where $$\displaystyle \theta_j = \sum_{i=1}^{n-1}P_{ji}(\tau,1)d\tau_i,\quad\quad d-1\leq j\leq d+1$$ depends only on $\tau$. 
Utilizing the condition $F_{d+2}(\tau,1)\equiv 0$, we derive the 1-form $\eta$ as follows: 
\begin{equation}\label{eq_3}
\eta=x\theta_{d-1}+x^2\theta_{d}+x^3\theta_{d+1}+(F_d(\tau,1)+x F_{d+1}(\tau,1))dx.   
\end{equation}
 This 1-form serves to define the foliation $\tilde{\f}$ in the chart $(\tau,x)$.
\par Given the aforementioned conditions, we are presented with the subsequent possibilities for $F_i$:
\begin{itemize}
    \item[(1)] $F_d \equiv 0$, and $F_{d+1} \not \equiv 0$;
    \item[(2)] $F_d \not \equiv 0$, and $F_{d+1} \equiv 0$;
    \item[(3)] $F_d \not \equiv 0$, and $F_{d+1} \not \equiv 0$.
\end{itemize}
\par In the case (1), after dividing $\eta$ by $x\cdot F_{d+1}$, we have
\[\eta_1=\gamma_0+x\gamma_1+x^2\gamma_2+dx,\] where 
$\gamma_0=\theta_{d-1}/F_{d+1}(\tau,1)$, $\gamma_1=\theta_{d}/F_{d+1}(\tau,1)$, and $\gamma_{2}=\theta_{d+1}/F_{d+1}(\tau,1)$ depends only on $\tau$.
The 1-form $\eta_1$ is equivalent to the 1-form from (\ref{formestrutproj}) of Lemma \ref{caso3}. Therefore, we can conclude either $\f$ is transversely affine, or $\f$ is the pull-back by a rational map of a foliation on $\p^2$, or $\f$ is pure transversely projective.
\par In the case (2), after dividing $\eta$ by $F_d$, we have
\[\eta_1=x\gamma_1+x^2\gamma_2+x^3\gamma_3+dx,\] where 
$\gamma_1=\theta_{d-1}/F_{d}(\tau,1)$, $\gamma_2=\theta_{d}/F_{d}(\tau,1)$, and $\gamma_{3}=\theta_{d+1}/F_{d}(\tau,1)$ depends only on $\tau$. The 1-form $\eta_1$ is equivalent to the 1-form from (\ref{eq27}) of Lemma \ref{caso2}, Subcase II. Consequently, we can deduce that either  $\f$ is transversely affine, or $\f$ is the pull-back by a rational map of a foliation on $\p^2$. 
\par In the case (3), we consider the birational map $\psi_z:\p^{n-1}\times\p^{1}\dashrightarrow\p^{n-1}\times\p^1$ defined as $\psi_z(\tau,z)=\left(\tau,\frac{F_d(\tau,1)z}{1-F_{d+1}(\tau,1)z}\right)$. A direct calculation yields $\psi_z^{*}(\eta_1)=\frac{F_{d}^2(\tau,1)}{(1-F_{d+1}(\tau,1)z)}\eta_z$, where 
\[\eta_z = z\beta_1 + z^2\beta_2 + z^3 \beta_3  + dz\]
with 
\begin{eqnarray*}
\beta_1 &=& \frac{\theta_{d-1}}{F_{d}}+\frac{dF_{d}}{F_d}\\ 
\beta_2 &=& -\frac{F_{d+1}}{F_{d}}\theta_{d-1} +\theta_{d}- F_{d-1}\frac{dF_{d}}{F_{d}} +dF_{d+1} \\
\beta_3 &=& -F_{d+1}\theta_{d}+F_{d}\theta_{d+1} 
\end{eqnarray*}
Once again, the 1-form $\eta_z$ is equivalent to the 1-form from (\ref{eq27}) of Lemma \ref{caso2}, Subcase II. Thus, we can deduce that either  $\f$ is transversely affine, or $\f$ is the pull-back by a rational map of a foliation on $\p^2$. This finishes the proof of Theorem \ref{teorema_B}.

\vspace{2cm}

\textit{Acknowledgement.} We would like to express our gratitude to Miguel Rodr\'iguez Pe\~na, Maur\'icio Corr\^ea, Omegar Calvo Andrade, Ruben Lizarbe, and Jorge Vit\'orio Pereira for their valuable comments and references that  greatly contributed to enhancing the quality of the paper's results. This paper represents a compilation of the outcomes from the second author's doctoral thesis at the Federal University of Minas Gerais, Brazil.

\end{document}